\newcommand{\ra}{\rightarrow}
\newcommand{\da}{\downarrow}
\newcommand{\C}{\mathbb{C}}
\newcommand{\R}{\mathbb{R}}
\newcommand{\N}{\mathbb{N}}
\newcommand{\Z}{\mathbb{Z}}
\newcommand{\F}{\mathbb{F}}
\newcommand{\TT}{\mathbb{T}}
\newcommand{\DD}{{\mathbb D}}
\newcommand{\E}{\texthmin{E}}
\newcommand{\cm}{{\bf{x}}}
\newcommand{\icm}{{\bf{z}}}
\newcommand{\T}{{\mathcal T}_\E}
\newcommand{\Om}{\mathrm{\Omega}}
\newcommand{\Si}{\Sigma}
\newcommand{\si}{\sigma}
\newcommand{\al}{\alpha}
\newcommand{\be}{\beta}
\newcommand{\Ga}{{\Gamma}}
\newcommand{\ga}{\gamma}
\newcommand{\de}{\delta}
\newcommand{\Te}{\mathrm{\Theta}}
\newcommand{\eps}{\varepsilon}
\newcommand{\ep}{\epsilon}
\newcommand{\ess}{\text{\rm{ess}}}
\newcommand{\ac}{\text{\rm{ac}}}
\newcommand{\s}{\text{\rm{s}}}
\newcommand{\nt}{\text{\rm{int}}}
\newcommand{\Wr}{\operatorname{Wr}}
\newcommand{\im}{\operatorname{Im}}
\newcommand{\re}{\operatorname{Re}}
\newcommand{\diam}{\text{\rm{diam}}}
\newcommand{\Sz}{\text{\rm{Sz}}}
\newcommand{\pl}{{{}_+}\hspace{-0.04cm}}
\newcommand{\mn}{{{}_-}\hspace{-0.04cm}}
\newcommand{\plm}{{{}_\pm}\hspace{-0.04cm}}
\DeclareMathOperator{\cvh}{cvh}
\DeclareMathOperator{\ca}{Cap}
\DeclareMathOperator{\supp}{supp}
\DeclareMathOperator*{\slim}{{\it{s}}-lim}
\theoremstyle{plain}
 \newtheorem{theorem}{Theorem}[section]
 \newtheorem{proposition}{Proposition}[section]
 \newtheorem{lemma}{Lemma}[section]
 \newtheorem{corollary}{Corollary}[section]
\theoremstyle{definition}
 \newtheorem{definition}[theorem]{Definition}
\numberwithin{equation}{section}
\numberwithin{theorem}{section}
\numberwithin{proposition}{section}
\numberwithin{lemma}{section}
\begin{document}

\title{\bf{DYNAMICS IN THE SZEG\H{O} CLASS \\ AND POLYNOMIAL ASYMPTOTICS} \\
\bigskip
\begin{center}
\small\emph{By}
\vspace{-0.5cm}
\end{center}
}
\vspace{-5cm}
\author{\bf JACOB S. CHRISTIANSEN}
\date{}

\maketitle

\begin{adjustwidth*}{1.6cm}{1.6cm}
{\bf{Abstract.}}
We introduce the Szeg\H{o} class, $\Sz(\large\E)$, for an arbitrary Parreau--Widom set $\large\E\subset\R$ and study the dynamics of its elements under the left shift. When the direct Cauchy theorem holds on $\C\setminus\large\E$, we show that to each $J\in\Sz(\large\E)$ there is a unique element $J'$ in the isospectral torus, $\T$, so that the left-shifts of $J$ are asymptotic to the orbit $\{J'_m\}$ on $\T$. Moreover, we show that the ratio of the associated orthogonal polynomials has a limit, expressible in terms of Jost functions, as the degree $n$ tends to $\infty$.
This enables us to describe the large $n$ behaviour of the orthogonal polynomials for every $J$ in the Szeg\H{o} class. \\

\noindent {\bf Keywords:} {Szeg\H{o} class, Parreau--Widom sets, polynomial asymptotics}

\noindent {\bf MSC 2010:} {Primary 42C05, 47B36, Secondary 58J53}

\end{adjustwidth*}

\flushbottom

\vspace{1cm}

\section{Introduction}
\label{intro}
The present paper deals with the dynamics of one- and two-sided Jacobi matrices. Given $J=\{a_n,b_n\}_{n=1}^\infty$, we can shift the parameters $m$ $(\geq 1)$ steps to the left in order to get a new Jacobi matrix, denoted $J_m$. Rather than deleting the first $m$ $a$'s and $b$'s, we think of $J_m$ as representing an operator on $\ell^2(\{-m+1,-m+2,\ldots\})$ which is naturally embedded in $\ell^2(\Z)$.

When the parameters $a_n, b_n$ are bounded, the sequence $\{J_m\}$ has (by compactness) at least one and often several accumulation points in the strong topology. Every such point $J'$, by nature a two-sided Jacobi matrix, is called a \emph{right limit} of $J$. If we write $J'=\{a_n',b_n'\}_{n=-\infty}^\infty$, then for some subsequence $m_l\to\infty$ and all fixed $n\in\Z$,
\begin{equation}
\label{rl}
a_{n+m_l}\to a_n', \quad b_{n+m_l}\to b_n'.
\end{equation}

The concept of right limits was introduced as a tool in spectral analysis by Last and Simon \cite{MR1666767} (see also \cite[Chap.~7]{MR2743058}). A very basic result is that
\begin{equation}
\label{ess}
\si(J')\subset\si_\ess(J),
\end{equation}
where the essential spectrum ($\si_\ess$) by definition is all of $\si(J)$, except isolated eigenvalues. More importantly, as proven in \cite{MR2254485}, the union of spectra of all right limits is equal to $\si_\ess(J)$. As regards the absolutely continuous part of the spectral measures, we have the opposite inclusion
\begin{equation}
\label{ac}
\Si_\ac(J')\supset\Si_\ac(J)
\end{equation}
for essential supports. Indeed, $J'$ has a.c. spectrum of multiplicity two on $\Si_\ac(J)$. When combining \eqref{ess} and \eqref{ac}, it readily follows that if $\si_\ess(J)=\Si_\ac(J)$ then also $\si(J')=\Si_\ac(J')$ and all four sets coincide.

In addition to the above, any right limit of $J$ is \emph{reflectionless} on $\Si_\ac(J)$. This important result, due to Remling \cite{MR2811596}, will play a key role for us. By definition, a two-sided Jacobi matrix $J'$ is said to be reflectionless on a set $\large\E\subset\R$ if (for all $n$) the diagonal spectral theoretic Green's function
\begin{equation}
\label{reflectionless}
G_{nn}(t+i0):=\lim_{\eps\da 0}\bigl<\de_n, (J'-t-i\eps)^{-1}\de_n\bigr>
\end{equation}
is purely imaginary for Lebesgue a.e. $t\in\large\E$. As explained in \cite{MR2594337}, there are several equivalent definitions of `reflectionless'. We shall bring some of them into play along way (in Section \ref{sec2}). As for now, we just mention that the name goes back to Craig \cite{MR1027503} and is used in scattering theory to describe vanishing reflection coefficients.
See, e.g., \cite{MR1414303,MR2430447,MR2271928,MR2467016,MR2504863,MR2824839,MR2775395,MR3027547,MR1674798} for further information and \cite{MR1896882} for more details on scattering theory for Jacobi matrices.

Given a compact set $\large\E\subset\R$, we denote by $\T$ the set of all two-sided Jacobi matrices that are reflectionless on $\large\E$ and have spectrum equal to $\large\E$. This set is of no interest when $\vert\large\E\vert=0$ (i.e., has zero Lebesgue measure) as the condition of being reflectionless on $\large\E$ then becomes vacuous. But when $\vert\large\E\vert>0$, the set $\T$ is of paramount importance as reaffirmed by the Denisov--Rakhmanov--Remling theorem:
\begin{theorem}
\label{DRR}
Suppose that $\vert\large\E\vert>0$ and assume also that $\large\E$ is essentially closed. If $J=\{a_n,b_n\}_{n=1}^\infty$ is a Jacobi ma\-trix with
\begin{equation}
\label{ess ac}
\si_\ess(J)=\Si_\ac(J)=\large\E,
\end{equation}
then any right limit of $J$ belongs to $\T$.
\end{theorem}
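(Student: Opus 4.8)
\medskip
\noindent\textbf{Proof strategy.}
The plan is to assemble the three ingredients recalled above---the spectral inclusion \eqref{ess}, the reverse inclusion \eqref{ac} for a.c.\ (essential) supports, and Remling's theorem that right limits are reflectionless on $\Si_\ac$. Fix an arbitrary right limit $J'$ of $J$. By the very definition of $\T$, it is enough to show that $\si(J')=\E$ and that $J'$ is reflectionless on $\E$ in the sense of \eqref{reflectionless}.

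The equality $\si(J')=\E$ I would prove by double inclusion. One direction is immediate from \eqref{ess} together with the hypothesis \eqref{ess ac}: $\si(J')\subseteq\si_\ess(J)=\E$, so $\si(J')$ is a \emph{closed} subset of $\E$. For the reverse inclusion, \eqref{ac} (with $\Si_\ac(J)=\E$) gives that an essential support of the absolutely continuous part of $J'$ contains $\E$ up to a Lebesgue-null set; as any such support is contained, modulo null sets, in $\si(J')$, we obtain $\vert\E\setminus\si(J')\vert=0$. This is the point at which the assumption that $\E$ is essentially closed enters: for $x\in\E$ and $\eps>0$, essential closedness forces $\vert(x-\eps,x+\eps)\cap\E\vert>0$, and since $\si(J')\subseteq\E$ differs from $\E$ only by a null set, also $\vert(x-\eps,x+\eps)\cap\si(J')\vert>0$, so in particular $(x-\eps,x+\eps)\cap\si(J')\neq\emptyset$. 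Letting $\eps\da0$ and invoking that $\si(J')$ is closed yields $x\in\si(J')$; hence $\E\subseteq\si(J')$ and therefore $\si(J')=\E$.

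The reflectionless property is then a direct appeal to Remling's theorem: every right limit of $J$ is reflectionless on $\Si_\ac(J)$, and since $\Si_\ac(J)=\E$ as equivalence classes of sets modulo null sets, this says precisely that $G_{nn}(t+i0)$ is purely imaginary for Lebesgue a.e.\ $t\in\E$. Together with $\si(J')=\E$, this shows $J'\in\T$. I do not anticipate a serious obstacle here: the analytic substance is carried entirely by the quoted results---Remling's reflectionlessness theorem and the a.c.-support inclusion \eqref{ac}---and what remains is the short measure-theoretic cleanup above. Its only genuinely delicate aspect is keeping the two readings of ``$=\E$'' straight, namely reading ``$\si(J')=\E$'' topologically while reading ``$\Si_\ac(J)=\E$'' and ``reflectionless on $\E$'' only up to null sets, so that the essential-closedness hypothesis does exactly the work needed to upgrade $\vert\E\setminus\si(J')\vert=0$ to $\E\subseteq\si(J')$.
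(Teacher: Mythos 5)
Your proposal is correct and follows the same route the paper intends: Theorem \ref{DRR} is quoted there without a written-out proof (it is Remling's result), but the introduction's remarks --- combining \eqref{ess} and \eqref{ac} with the reflectionlessness of right limits on $\Si_\ac(J)$ --- are exactly the ingredients you assemble, and your use of essential closedness to upgrade $\vert\E\setminus\si(J')\vert=0$ to $\E\subseteq\si(J')$ is precisely the point the paper flags when explaining why that hypothesis can otherwise be dropped by enlarging $\T$.
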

The extra assumption of $\large\E$ being essentially closed (i.e., it intersects an arbitrary open interval either in a set of positive Lebesgue measure or not at all) can be removed if one modifies the definition of $\T$ to also include all two-sided Jacobi matrices that are reflectionless on $\large\E$ but for which the spectrum is merely a subset of $\large\E$. However, as explained in Section \ref{sec2}, all sets $\large\E$ of consideration in the present paper will automatically be essentially closed.

It follows from the above theorem that the sequence $\{J_m\}$ of left-shifts approaches $\T$ \emph{as a set} when $m\to\infty$. We can thus think of $\T$ as the natural limiting object associated with $\large\E$ and it is natural to ask about its structure. How can we view it as a topological space and do the individual elements share certain properties?

Under fairly weak conditions on $\large\E$, one can show that $\T$ is homeomorphic to a torus (see, e.g., \cite{MR3027547} for details). In fact, Remling raised the question if there at all exist compact sets of positive measure for which this is \emph{not} the case.
For a large class of compact sets, namely when $\large\E$ is a Parreau--Widom set and the direct Cauchy theorem holds on $\C\setminus\large\E$, the homeomorphism can be described quite explicitly. In the seminal work \cite{MR1674798}, which deals with such Parreau--Widom sets, Sodin and Yuditskii proved that $\T$ is homeomorphic to the $\ell$-dimensional torus, $\TT^\ell$, where $\ell$ is the number (possibly infinite) of bounded components in $\R\setminus\large\E$. Moreover, the Jacobi parameters of elements in $\T$ are uniformly almost periodic sequences (i.e., almost periodic in the Bohr/Bochner sense).

We shall introduce the notion of Parreau--Widom sets in Section \ref{sec2} and explain why the direct Cauchy theorem is needed in
Section \ref{sec3}. At this point, we just mention that the notion includes all compact sets that are %`uniformly thick' or
so-called \emph{homogeneous} in the sense of Carleson \cite{MR730079} (e.g., Cantor sets of positive measure).

While Theorem \ref{DRR} applies to a wide class of Jacobi matrices associated with $\large\E$, it does not give details on \emph{how} the sequence $\{J_m\}$ approaches $\T$. In this paper, we shall assume more than \eqref{ess ac} but so much the stronger the conclusion will be. Our first main result, valid when $\large\E$ is a Parreau--Widom set and the direct Cauchy theorem holds, reads as follows:
\begin{theorem}
\label{JSC}
Suppose that $J=\{a_n,b_n\}_{n=1}^\infty$ belongs to the Szeg\H{o} class for $\large\E$. Then there is a unique $J'=\{a_n', b_n'\}_{n=-\infty}^\infty$ in $\T$ such that
\begin{equation}
\label{limit}
\vert a_n-a_n' \vert + \vert b_n-b_n'\vert \to 0 \; \mbox{ as $n\to\infty$}.
\end{equation}
Moreover, $\prod_{n=1}^\infty (a_n/a_n')$ converges conditionally as does $\sum_{n=1}^\infty (b_n-b_n')$.
\end{theorem}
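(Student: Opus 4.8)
The plan is to exploit the theory of Jost functions for Parreau--Widom sets: to each $J$ in the Szeg\H{o} class one attaches a Jost function $u(z,J)$, analytic and character-automorphic on the universal covering of $\C\setminus\E$, whose boundary values on $\E$ satisfy a Szeg\H{o}-type integrability condition, and whose zeros encode the eigenvalues of $J$ in the gaps. The first step is to show that the Szeg\H{o} condition is stable under the left shift: if $J$ lies in $\Sz(\E)$ then so does each $J_m$, with the Jost function transforming by a controlled factor (a Blaschke-type product times the coordinate function of the covering map). The key quantitative input is that the Szeg\H{o} integral of $J_m$ is bounded uniformly in $m$ and in fact the ``excess'' Szeg\H{o} mass tends to $0$; this forces the Jost functions $u(\cdot,J_m)$ (after the natural renormalisation by the character) to form a precompact family and any limit point to be the Jost function of an element of $\T$. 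Because the direct Cauchy theorem holds, the correspondence $J\mapsto u(\cdot,J)$ is a homeomorphism onto its image and $\T$ is exactly the fibre of reflectionless data, so the limit point is unique: call it $J'$. This already gives that $J_m\to J'_m$, i.e. \eqref{limit}, once one unwinds that convergence of Jost functions on the covering implies coefficient-stripping convergence of the Jacobi parameters.

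The second step is the conditional convergence of $\prod (a_n/a_n')$ and $\sum (b_n-b_n')$. Here I would use the step-by-step sum rules (the higher-order analogues of the $C_0$ and $P_2$ sum rules of Killip--Simon type, adapted to the Parreau--Widom setting by, e.g., the entropy/Szeg\H{o} function of the covering map). Concretely, there is an identity expressing $\log(a_n/a_n')$ and $b_n-b_n'$ in terms of Taylor-type coefficients of $\log$ of the ratio $u(\cdot,J_n)/u(\cdot,J'_n)$ evaluated at a fixed point on the covering lying over $\infty$, and the Szeg\H{o} condition guarantees that these ratios converge in a Hardy-space sense so that their logarithmic coefficients are summable \emph{after} cancellation — but not absolutely. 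Telescoping $\sum_{n=1}^N (b_n - b_n') = \Re\bigl(\text{(boundary of the two covering-map functions at stage } N) - (\text{stage } 0)\bigr)$, and similarly for the product, reduces the claim to the convergence of these boundary quantities as $N\to\infty$, which is exactly the content of the first step. The word ``conditionally'' signals that one genuinely uses the oscillation: the individual $|b_n-b_n'|$ need only go to $0$, not be in $\ell^1$.

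The main obstacle, I expect, is the uniqueness of $J'$ and the promotion from ``$\{J_m\}$ approaches $\T$ as a set'' (which is just Theorem~\ref{DRR} plus precompactness) to ``$\{J_m\}$ approaches a single orbit $\{J'_m\}$''. Set-convergence allows the shifts to drift around the torus $\TT^\ell$; ruling this out requires showing that the character of the Jost function $u(\cdot,J_m)$ converges in $\TT^\ell$, and this is where the direct Cauchy theorem is essential — it is what makes the map from Jost functions (equivalently, from the pair: character plus the Hardy-class Szeg\H{o} factor) back to Jacobi matrices continuous and injective, so that a Cauchy sequence of characters plus a convergent sequence of Szeg\H{o} factors yields a genuinely convergent sequence of matrices. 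I would isolate this as a lemma: \emph{the Szeg\H{o} condition implies the characters $\chi_m$ of $J_m$ form a Cauchy sequence in $\TT^\ell$}, proved by estimating $\dist(\chi_{m+1},\chi_m)$ by the local Szeg\H{o} mass lost in one stripping step, whose sum over $m$ converges precisely because $J\in\Sz(\E)$. Granting that lemma, the rest is the (substantial but standard-in-spirit) bookkeeping of covering-space function theory sketched above.
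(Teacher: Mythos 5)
Your overall architecture---Jost functions, the direct Cauchy theorem making the character map $\T\ni J'\mapsto\chi(J')\in\Ga_\E^{\,*}$ a homeomorphism, and extraction of $\prod(a_n/a_n')$ and $\sum(b_n-b_n')$ from an expansion at $\infty$---matches the paper's. But your central lemma is false as stated: the characters $\chi_m=\chi(J\vert_m)$ of the stripped matrices do \emph{not} form a Cauchy sequence in $\Ga_\E^{\,*}$. Coefficient stripping acts on characters exactly by $\chi(J\vert_{m+1})=\chi_B^{-1}\chi(J\vert_m)$, where $\chi_B$ is the character of the fundamental Blaschke product $B$; there is no ``Szeg\H{o} mass lost per step'' correction to estimate, and $\dist(\chi_{m+1},\chi_m)$ equals the fixed distance from $\chi_B^{-1}$ to the identity, which is nonzero as soon as $\large\E$ has a gap, so its sum over $m$ diverges. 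Consequently $\{J_m\}$ has many strong limit points (the closure of an orbit of a torus rotation in $\T$), not a unique one, and the assertion ``the limit point is unique: call it $J'$'' cannot be repaired along your route. The correct mechanism is different: define $J'$ as the unique element of $\T$ with $\chi(J')=\chi(J)$ (here the Jost isomorphism, hence the direct Cauchy theorem, enters), observe that $\chi(J\vert_m)=\chi(J'_m)$ for \emph{every} $m$ because both sides transform by the same factor $\chi_B^{-m}$, and then argue by contradiction: if $|a_n-a_n'|+|b_n-b_n'|\not\to 0$, extract a subsequence along which $J$ and $J'$ have distinct right limits $K\neq K'$; both lie in $\T$ by the Denisov--Rakhmanov--Remling theorem (a deep input you never invoke---``excess Szeg\H{o} mass tends to $0$'' does not by itself place subsequential limits in $\T$), and the continuity of characters under strong convergence of shifts (the Jost asymptotics of Section~4) forces $\chi(K)=\chi(K')$, hence $K=K'$ by injectivity of the character map, a contradiction.

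For the second assertion your telescoping sketch is in the right spirit but circular as written: you reduce it to ``convergence of boundary quantities at stage $N$'' and declare this to be ``exactly the content of the first step,'' whereas the first step only yields $J_m-J'_m\to 0$, which does not by itself control the accumulated products and sums. What is actually needed is the locally uniform convergence of $P_n(x,d\mu)/P_n(x,d\mu')$ near $x=\infty$ (Theorem~\ref{Sz asympt}, whose proof requires the Green's function comparison and the Jost-solution machinery of Section~5); once that is available, reading off the first two coefficients of $\log$ of the ratio at $\infty$ gives the conditional convergence of $\prod(a_k/a_k')$ and $\sum(b_k-b_k')$ exactly as you anticipate.
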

We shall define the Szeg\H{o} class for $\large\E$ in Section \ref{sec2}. It consists of all those $J$ with essential spectrum $\large\E$ for which the so-called Szeg\H{o} condition holds and where the eigenvalues in $\R\setminus\large\E$ obey a certain Blaschke condition.

The left-shifts of $J$ have an actual limit precisely when $\large\E$ is an interval ($\T$ consists of a single point in that case).
As soon as there is a gap in $\large\E$, $J$ has at least two different right limits. The strength of Theorem \ref{JSC} lies in providing a unique element $J'\in\T$ so that the left-shifts of $J$ are asymptotic to the orbit $\{J_m'\}$ on $\T$. Specifically, the condition \eqref{limit} is equivalent to
\begin{equation}
J_m-J_m'\xrightarrow[]{\,str.\,} 0.
\end{equation}
As the proof will reveal, one can single out the unique $J'$ by matching the characters of the associated Jost functions (to be defined in Section \ref{sec3}). In exact language, this simply means that $\chi(J')=\chi(J)$.

The conditional convergence of $\prod(a_n/a_n')$ and $\sum(b_n-b_n')$ is a direct consequence of Szeg\H{o} asymptotics for the associated orthonormal polynomials (to be established in Section \ref{sec5}). We conjecture that one also has
\begin{equation}
\label{ell2}
\medop\sum_{n=1}^\infty(a_n-a_n')^2+(b_n-b_n')^2<\infty
\end{equation}
in case of which $\sum(a_n-a_n')$ converges conditionally too.
For comparison, the stronger condition
\begin{equation}
\label{ell1}
\medop\sum_{n=1}^\infty|a_n-a_n'|+|b_n-b_n'|<\infty
\end{equation}
implies Szeg\H{o} asymptotics as in Theorem \ref{Sz asympt} below. One can show this by a simple variation of parameters argument using a discrete version of Levinson's theorem (see, e.g., \cite{MR0156122} or \cite{MR1002291}). See also \cite{MR1145921} for a different approach.

Based on \cite{MR2785827}, it is natural to conjecture that \eqref{ell1} in fact implies that $J$ belongs to the Szeg\H{o} class for $\large\E$. But one should bear in mind that Szeg\H{o} asymptotics can occur outside the Szeg\H{o} class (see, e.g., \cite{MR1091035,MR2221136} or \cite{MR2891229}). We seek to investigate this conjecture and related issues in an upcoming paper.

Results similar to Theorem \ref{JSC} (and Theorem \ref{Sz asympt} below) have been obtained by Peherstorfer and Yuditskii
\cite{MR1981915,PY} in the framework of homogeneous sets. However, our approach is centered around $\T$ and the method of
proof is very different from the variational approach used in \cite{MR1981915}.
Our proof of \eqref{limit} is inspired by the techniques developed in \cite{MR2659747,MR2784484} %, to be presented in Section \ref{},
and relies on three important results:
\begin{itemize}
\item[(1)]
the Denisov--Rakhmanov--Remling theorem (Theorem \ref{DRR}) \\
 --- any right limit of $J$ belongs to $\T$,
\item[(2)]
the Jost isomorphism of Sodin and Yuditskii (Theorem \ref{SY}) \\
 --- the map $\T\ni J'\mapsto\chi(J')\in\Ga_\E^{\,*}$ is a homeomorphism,
\item[(3)]
the Jost asymptotics for right limits (Theorem \ref{Jost asympt}) \\
 --- if $J_{m_l}\xrightarrow[]{\,str.\,} J'\in\T$, then $\chi(J\vert_{m_l})\to\chi(J')$.
\end{itemize}
We shall expand on (2) in Section \ref{sec3} and provide a proof of (3) in Section \ref{sec4}.
The bottom line is that with (1)--(3) at our disposal, the proof reads as follows (if we use the notation of Sections 3--4):
\begin{proof}
Clearly, there can be at most one $J'$ with almost periodic Jacobi parameters so that \eqref{limit} holds. Let $J'$ be the unique element in $\T$ for which
\begin{equation}
\label{char eq}
\chi(J')=\chi(J).
\end{equation}
As a direct consequence of \eqref{linearize}, we have $\chi(J'_m)=\chi(J\vert_m)$ for all $m\geq 1$.
Assume now that $|a_n-a_n'|+|b_n-b_n'|\not\rightarrow 0$. Our goal is then to arrive at a contradiction.

If \eqref{limit} fails, there is a subsequence $m_l\to\infty$ so that $J$ and $J'$ have different right limits along $m_l$, say
\begin{equation}
\label{strong}
\slim_{\quad\! l\to\infty} J_{m_l}=K \, \neq \, K'=\slim_{\quad\! l\to\infty} J'_{m_l}.
\end{equation}
According to the Denisov--Rakhmanov--Remling theorem, both $K$ and $K'$ lie in $\T$.
Moreover, as \eqref{strong} leads to locally uniform convergence of the associated Jost functions (by Theorem \ref{Jost asympt}), we have
\begin{equation}
%\left.\begin{aligned}
\chi(J\vert_{m_l}) \ra \chi(K)
\; \mbox{ and } \;
\chi(J_{m_l}') \ra \chi(K'). % \,
%\end{aligned}
%\right\}
%\, \mbox{ as } \, l\to\infty.
\end{equation}
Consequently, $\chi(K)=\chi(K')$ and therefore $K=K'$ by the Jost isomorphism (Theorem \ref{SY}). But this contradicts \eqref{strong} and thus \eqref{limit} must be true.
\end{proof}

%The above proof is aimed at the expert who is already familiar with the theory and comfortable with the notation.
%More details concerning (2)--(3) will be provided in Sections 3--4.

To formulate the statement on Szeg\H{o} asymptotics we first introduce some notation. Given a bounded Jacobi matrix $J=\{a_n, b_n\}_{n=1}^\infty$ with spectral measure $d\mu$, let $P_n(x):=P_n(x,d\mu)$ denote the associated \emph{orthonormal polynomials} (i.e., $P_n$ is of degree $n$, has positive leading coefficient, and $\int P_n P_m \,d\mu=\de_{n,m}$).
As is well known,
\begin{equation}
\label{3term}
xP_n(x)=a_{n+1}P_{n+1}(x)+b_{n+1}P_n(x)+a_nP_{n-1}(x)
\end{equation}
for all $n\geq 1$ (or $n\geq 0$ if we define $a_0\equiv 0$). When $J'$ belongs to $\T$, we write $d\mu'$ for the spectral measure of $J'$ restricted to $\ell^2(\N)$. This restriction of $J'$ is also denoted by $J^+$ in Section \ref{sec2} and onwards.

\begin{theorem}
\label{Sz asympt}
Suppose that $J$ belongs to the Szeg\H{o} class for $\large\E$ and let $J'$ be the unique element in $\T$ for which \eqref{limit} holds. Then
\begin{equation}
\label{ratio P}
{P_n(x,d\mu)}/{P_n(x,d\mu')}
\end{equation}
converges locally uniformly in $\overline{\C}\setminus\cvh(\large\E)$ as $n\to\infty$ and the limit is nonzero at every point $x\notin\si(J)$.
\end{theorem}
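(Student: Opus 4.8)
The plan is to lift the problem to the universal cover $\cm\colon\DD\to\overline{\C}\setminus\E$ (with deck group $\Ga$ and Blaschke product $B$, as in Section~\ref{sec3}) and to compare the orthonormal polynomials of $J$ and of $J'$ through their Jost solutions. Recall that a Jacobi matrix $\widetilde J$ in the Szeg\H{o} class carries a Jost solution $u_\bullet(\,\cdot\,;\widetilde J)$ of the recursion~\eqref{3term} --- the essentially unique solution of it that is $\ell^2$ at $+\infty$ --- normalized so that $B^{-n}u_n(\,\cdot\,;\widetilde J)$ converges locally uniformly on $\DD$ to the Jost function $u(\,\cdot\,;\widetilde J)$, a character-automorphic function with character $\chi(\widetilde J)$ which is zero-free away from the points lying over $\si(\widetilde J)$. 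Since this applies after every left-shift, $B^{-n}u_n(\,\cdot\,;J)$ is the Jost function of $J\vert_n$, with character $\chi(J\vert_n)$. The element $J'\in\T$ has no eigenvalues, so $u(\,\cdot\,;J')$ is zero-free on $\DD$, and the almost periodicity of its parameters makes the Jost functions of $J'$ and of all its shifts uniformly comparable to $1$ on compact subsets of $\DD$.

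First I would check that $Q_n:=P_n(\,\cdot\,,d\mu)/P_n(\,\cdot\,,d\mu')$ is a normal family on $\overline{\C}\setminus\cvh(\E)$. There neither polynomial vanishes --- their zeros lie in the gaps of $\E$, hence in $\cvh(\E)$ --- so $Q_n$ is holomorphic and zero-free, with $Q_n(\infty)=(a_1'\cdots a_n')/(a_1\cdots a_n)$. Writing $p_n:=B^n\,P_n(\cm(\,\cdot\,),d\mu)$ and $p_n':=B^n\,P_n(\cm(\,\cdot\,),d\mu')$, which are holomorphic and character-automorphic on $\DD$, one shows that $|p_n|$ and $|p_n'|$ are bounded above and below on compact subsets of $\cm^{-1}\bigl(\overline{\C}\setminus\cvh(\E)\bigr)$, uniformly in $n$: for $p_n'$ this is the explicit asymptotics of Sodin--Yuditskii; for $p_n$, the upper bound is the quantitative form of the Szeg\H{o} condition for $J$ (an $L^2$-bound on $\E$, promoted to a locally uniform bound by subharmonicity, combined with the Blaschke condition on the eigenvalues), the value at $\cm^{-1}(\infty)$ is controlled because Szeg\H{o} and Blaschke force $(a_1\cdots a_n)\,\ca(\E)^{-n}$ to stay bounded away from $0$ and $\infty$, and the lower bound on compact sets then follows from the upper bound and the value at infinity via Harnack's inequality for the positive harmonic functions $\mathrm{const}-\log|p_n|$. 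Hence $\{Q_n\}$ and $\{1/Q_n\}$ are locally bounded, $\{Q_n\}$ is normal, and it remains to identify every subsequential limit.

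For this I would invoke the Szeg\H{o} asymptotics of Section~\ref{sec5} together with the three ingredients from the introduction. Given a subsequence, pass to a further one along which $\slim_{l}J_{m_l}=K$ and $\slim_{l}J'_{m_l}=K'$. By the Denisov--Rakhmanov--Remling theorem $K,K'\in\T$, and by Theorem~\ref{JSC} applied to $J$ (using $J\vert_n-J'_n\to0$) we get $K=K'$. The analysis of Section~\ref{sec5} then shows that $p_{m_l}\to\Xi_\mu\,\Omega_K$ and $p_{m_l}'\to\Xi_{\mu'}\,\Omega_K$ locally uniformly on $\DD$, where the \emph{isospectral factor} $\Omega_K$ depends only on the common right limit $K$ --- this is where Theorem~\ref{Jost asympt} (Jost asymptotics for right limits) and the Jost isomorphism Theorem~\ref{SY} enter --- while the fixed factors $\Xi_\mu,\Xi_{\mu'}$ depend on $J$, resp.\ $J'$, only through their Jost functions. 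Therefore $Q_{m_l}=p_{m_l}/p_{m_l}'\to\Xi_\mu/\Xi_{\mu'}$, a limit that does not depend on the subsequence, and by normality the whole sequence $\{Q_n\}$ converges locally uniformly on $\overline{\C}\setminus\cvh(\E)$ to it. Unwinding the normalizations, this limit equals, up to a nonzero constant, the quotient $u(\,\cdot\,;J')/u(\,\cdot\,;J)$ of Jost functions; since $\chi(J')=\chi(J)$ the quotient is $\Ga$-invariant and hence descends to an honest function of $x$, as it must. Non-vanishing is then clear: $u(\,\cdot\,;J)$ and $u(\,\cdot\,;J')$ are both zero-free on $\overline{\C}\setminus\cvh(\E)$ --- the zeros of the first sit over the eigenvalues of $J$ and those of the second, if any, over $\si(J')=\E$, all of which lie in $\cvh(\E)$ --- so the limit is nowhere zero on $\overline{\C}\setminus\cvh(\E)$, and this set is in any case disjoint from $\si(J)$. (Consistently, at $x=\infty$ the limit is $\lim_n(a_1'\cdots a_n')/(a_1\cdots a_n)=\prod_n(a_n'/a_n)$, which is finite and nonzero by Theorem~\ref{JSC}.)

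The heart of the matter is the convergence $p_{m_l}\to\Xi_\mu\,\Omega_K$ and, in particular, the splitting of $p_n$ asymptotically into a fixed part built from the Jost function of $J$ and an oscillating torus part that is insensitive to the choice of Szeg\H{o}-class matrix above a given isospectral orbit. Because $p_n$ itself does not converge --- it only shadows the isospectral torus --- this cannot be obtained by a naive limit; it requires precisely the subsequential right-limit mechanism above, which feeds the Denisov--Rakhmanov--Remling theorem, the Jost asymptotics for right limits, and the Jost isomorphism into the uniqueness furnished by Theorem~\ref{JSC}. Once this representation is available, the remaining points --- normality, the value at infinity, and the non-vanishing of the limit --- are comparatively routine.
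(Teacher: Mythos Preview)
Your overall strategy --- lift to the cover, establish normality of $Q_n$, then identify subsequential limits via right limits and Jost asymptotics --- is in the right spirit, but there is a concrete error that undermines several steps. You assert that the zeros of $P_n(\cdot,d\mu)$ lie in $\cvh(\large\E)$ (so that $Q_n$ is zero-free on $\overline{\C}\setminus\cvh(\large\E)$) and, at the end, that the eigenvalues of $J$ all lie in $\cvh(\large\E)$. Neither holds in general: a matrix in $\Sz(\large\E)$ may well have eigenvalues \emph{outside} $[\al,\be]=\cvh(\large\E)$ --- the Blaschke condition \eqref{ev} permits finitely or even infinitely many such points accumulating at $\al$ or $\be$. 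The zeros of $P_n(\cdot,d\mu)$ then lie in $\cvh(\supp d\mu)\supsetneq[\al,\be]$, and $P_n$ does vanish near those exterior eigenvalues. This destroys your Harnack lower bound on $|p_n|$ (since $p_n$ is not zero-free on $\F^\nt$), and it invalidates the closing paragraph: the limit genuinely vanishes at those exterior eigenvalues, which is exactly why the theorem only asserts non-vanishing at $x\notin\si(J)$ rather than on all of $\overline{\C}\setminus\cvh(\large\E)$. (Incidentally, the limit is $u(\cdot;d\mu)/u(\cdot;d\mu')$, not its reciprocal, and there is no extra constant.)

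The paper avoids all of this by working with an exact identity rather than a normality-plus-subsequence scheme. From $G_{nn}(z;J)=u_n(z;d\mu)\,P_{n-1}(\cm(z),d\mu)/u(z;d\mu)$ one reads off
\[
\frac{P_{n-1}(\cm(z),d\mu)}{P_{n-1}(\cm(z),d\mu')}
=\frac{G_{nn}(z;J)}{G_{nn}(z;J^+)}\cdot\frac{u_n(z;d\mu')}{u_n(z;d\mu)}\cdot\frac{u(z;d\mu)}{u(z;d\mu')},
\]
and it then suffices to show that the first two factors tend to $1$ locally uniformly on $\F^\nt$. The Jost-solution ratio goes to $1$ directly from Theorem~\ref{Jost asympt} via \eqref{un}, and $G_{nn}(z;J)/G_{nn}(z;J^+)\to 1$ follows from the second resolvent identity together with a Combes--Thomas bound, once one knows $G_{nn}(z;J^+)$ stays bounded below (a consequence of the torus asymptotics in Proposition~\ref{P'}). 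This route never needs a lower bound on $|p_n|$, never asks whether $Q_n$ is zero-free, and delivers the limit and its zero set in one stroke. Your appeal to a factorization $p_{m_l}\to\Xi_\mu\,\Omega_K$ ``from Section~\ref{sec5}'' is effectively invoking Corollary~\ref{Pn}, which in the paper is \emph{deduced from} the very theorem you are proving; if you try to establish that factorization independently for a general $J\in\Sz(\large\E)$, you will find you need the Green's-function comparison above, at which point the subsequence machinery becomes superfluous.
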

Here, $\cvh(\cdot)$ denotes the convex hull. As we will show in Section \ref{sec5}, the limit of \eqref{ratio P} can be expressed as a ratio of Jost functions. This also explains why the limit only vanishes at the isolated points in the support of $d\mu$.
With the above theorem at hand, we can now complete the proof of Theorem \ref{JSC}.
\begin{proof}[Proof, continued]
Recall that the orthonormal polynomials generated by \eqref{3term} can be written as
\[
P_n(x)=(a_1\cdots a_n)^{-1}\Bigl( x^n+(b_1+\ldots+b_n)x^{n-1}+\,\mbox{lower order terms}\, \Bigr).
\]
Hence, we have
\[
\log\biggl(\frac{P_n(x,d\mu)}{P_n(x,d\mu')}\biggr)=
-\log\Bigl(\medop\prod_{k=1}^n ({a_k}/{a_k'})\Bigr)
+\Bigl(\medop\sum_{k=1}^n(b_k-b_k')\Bigr)x^{-1}+\mathcal{O}\bigl(x^{-2}\bigr)
\]
around $x=\infty$.
Due to the uniform convergence of \eqref{ratio P}, it follows that both $\prod_{k=1}^n (a_k/a_k')$ and $\sum_{k=1}^n (b_k-b_k')$ have a limit as $n\to\infty$.
\end{proof}

Theorems \ref{JSC} and \ref{Sz asympt} describe elements of the Szeg\H{o} class by means of comparison. The set $\T$ is much smaller than $\Sz(\large\E)$ and it carries a fair amount of structure which makes the comparison interesting. As a consequence of Theorem \ref{JSC}, for instance, the Jacobi parameters of a matrix $J\in\Sz(\large\E)$ are \emph{asymptotically almost periodic}. We can specify the almost periods by computing the harmonic measure of suitable parts of $\large\E$.

Due to \eqref{limit}, the ratio in \eqref{ratio P} is a natural object of study. As the behaviour of $P_n(\,\cdot\,,d\mu')$ is well-understood, Theorem \ref{Sz asympt} also leads to a more direct description of $P_n(\,\cdot\,,d\mu)$ for large values of $n$. We will show in Section \ref{sec5} that up to some exponentially small error, $P_n(\,\cdot\,,d\mu')$ is given by the product of the Jost functions for $J^-_n$ and $J'$ divided by the $n$th power of the fundamental Blaschke product associated with $\large\E$ and some other combination of Blaschke products related to $J'$. The asymptotic behaviour of $P_n(\,\cdot\,,d\mu)$ is obtained simply by replacing the Jost function for $J'$ with the one for $J$ (see Corollary \ref{Pn}).

\section{Parreau--Widom sets and the Szeg\H{o} class}
\label{sec2}

Let $\large\E\subset\R$ be a compact set of positive logarithmic capacity.
Assume that $\large\E$ is regular and thus has no isolated points. Then it has the form %it in the form
\begin{equation}
\label{E}
{\large\E}=\bigl[\al, \be\bigr]\setminus\medmath{\sideset{}{_j}\bigcup}(\al_j,\be_j),
\end{equation}
where $\medmath{\sideset{}{_j}\bigcup}$ is a countable union of disjoint open subintervals of $[\al, \be]$ and
\[
\al<\al_i\neq\be_j<\be \;\mbox{ for all $i, j$}.
\]
Now, let $g$ be the potential theoretic Green's function for the domain $\Om:=\overline\C\setminus\large\E$ with pole at $\infty$. While $g$ vanishes on the set $\large\E$, it is strictly concave on every gap in $\large\E$. Hence, for each $j$ there is a unique point $c_j\in(\al_j,\be_j)$ so that $g'(c_j)=0$.
\begin{definition}
The set $\large\E$ is called a \emph{Parreau--Widom} set if
\begin{equation}
\label{PW}
\sideset{}{_j}\sum g(c_j)<\infty.
\end{equation}
\end{definition}
Shortly, we shall give a more geometric interpretation of such sets and explain why they necessarily have absolutely continuous equilibrium measure (denoted $d\mu_\E$) and hence positive Lebesgue measure.
\begin{definition}
When $\large\E\subset\R$ is a Parreau--Widom set, we define the \emph{Szeg\H{o} class} for $\large\E$, denoted $\Sz(\large\E)$, to be the set of all probability measures
$d\mu=f(t)dt+d\mu_\s$, with $d\mu_\s$ singular to $dt$, for which
\begin{itemize}
\item[(1)]
the essential support is equal to $\large\E$, \vspace{0.1cm}
\item[(2)]
the absolutely continuous part satisfies the \emph{Szeg\H{o} condition}
\vspace{-0.1cm}
\begin{equation}
\label{Szego}
\int_\E \log f(t)\,d\mu_\E(t)>-\infty,
\end{equation}
\item[(3)]
the isolated mass points $\{x_k\}$ outside $\large\E$ satisfy the \emph{Blaschke condition}
\vspace{-0.1cm}
\begin{equation}
\label{ev}
\sideset{}{_k}\sum g(x_k)<\infty.
\end{equation}
\end{itemize}
\end{definition}
We shall comment on the Szeg\H{o} condition momentarily. The Blaschke condition says that the isolated mass points of $d\mu$, if infinite in number, accumulate sufficiently fast at the endpoints $\al_i$ and $\be_j$. Moreover, as a direct consequence of \eqref{PW}, we see that \eqref{ev} is automatically satisfied if $d\mu$ has no more than $n$ (with $n$ fixed) mass points in every gap in $\large\E$ and only finitely many mass points outside the interval $[\al, \be]$.

Given a set $\large\E$ as in \eqref{E}, there is a unique conformal mapping $\psi$ of the upper half-plane ($\im x>0$) onto a comb-like domain of the form
\[
\bigl\{u+iv \mid 0<v<\pi, \, u>0 \bigr\}
\setminus\medmath{\sideset{}{_j}\bigcup}\bigl\{u+iv_j \mid 0<u\leq h_j\bigr\}
\]
such that $\large\E$ is mapped to the base of the comb (i.e., the line segment between $u+iv=0$ and $i\pi$) and
\[
\psi(\al)=i\pi, \quad \psi(\be)=0, \quad \psi(\infty)=\infty.
\]
Since $\large\E$ is regular, $\psi$ has a continuous extension to $\im x\geq 0$ and the set of $j$'s for which $h_j>\de$ is finite
for every $\de>0$. It readily follows that $\re\psi$ coincides with the (potential theoretic) Green's function and the Parreau--Widom condition \eqref{PW} is thus equivalent to
\begin{equation}
\label{hj}
\sideset{}{_j}\sum h_j <\infty.
\end{equation}
Using McMillan's sector theorem, see \cite{MR0257330} or \cite{MR1217706}, one can show that the equilibrium measure of $\large\E$ is absolutely continuous (wrt.\;Lebesgue measure) if and only if
\begin{equation}
\label{sector}
\sup_j \bigl\{{h_j}/{\vert v_j-v\vert}\bigr\}<\infty \;\mbox{ for a.e. $v\in(0,\pi)$}
\end{equation}
(see, e.g., \cite{MR2964141} for details). Since \eqref{hj} clearly implies \eqref{sector}, $d\mu_\E$ is absolutely continuous whenever $\large\E$ is a Parreau--Widom set. In particular, every such set has positive Lebesgue measure and is essentially closed.

While the introduction is written in the language of Jacobi matrices, we deliberately introduced the Szeg\H{o} class via probability measures. As is well known, there is a one-one correspondence between compactly supported (nontrivial) probability measures on $\R$ and bounded Jacobi matrices. Given $d\mu$, the associated Jacobi matrix is
\[
J=\left( \begin{array}{ccccc}
b_1 & a_1 &  &  & \\
a_1 & b_2 & a_2 & &  \\
\vspace{-0.1cm}
& a_2 & b_3 & \hspace{-0.1cm} a_3 & \\
& & \hspace{0.01cm} \protect\rotatebox[origin=c]{-3}{$\ddots$} &
\hspace{0.01cm} \protect\rotatebox[origin=c]{-3}{$\ddots$} &
\hspace{0.01cm} \protect\rotatebox[origin=c]{-3}{$\ddots$} \\
\end{array} \right),
\]
where the parameters $\{a_n, b_n\}_{n=1}^\infty\in (0,\infty)^\N\times\R^\N$ are the recurrence coefficients of the orthonormal polynomials $P_n(x, d\mu)$. When $\supp(d\mu)$ is compact, these coefficients are bounded. The spectrum of $J$, viewed as a bounded operator on $\ell^2(\N)$, coincides with $\supp(d\mu)$. In fact, we have
\begin{equation}
\label{m}
m(x):=\bigl< \de_1, (J-x)^{-1}\de_1\bigr>=
\int_\R\frac{d\mu(t)}{t-x}, \quad x\in\C\setminus\supp(d\mu)
\end{equation}
and shall also refer to $d\mu$ as the spectral measure of $J$. The function in \eqref{m} is called the $m$-function for $J$.

When $d\mu$ (or $J$) belongs to $\Sz(\large\E)$, we clearly have
\begin{equation}
\label{f}
f(t)>0 \,\mbox{ for a.e.\;$t\in\large\E$}
\end{equation}
and elsewhere $f$ vanishes (a.e.). Hence, $\Si_\ac(J)=\large\E$ and the hypothesis \eqref{ess ac} of the Denisov--Rakhmanov--Remling theorem is satisfied. But of course the Szeg\H{o} condition is much stricter than \eqref{f}. Assuming that (1) and (3) both hold, \eqref{Szego} is satisfied if and only if
\begin{equation}
\label{Sz thm}
\limsup_{n\to\infty}\frac{a_1\cdots a_n}{\ca(\large\E)^n}>0,
\end{equation}
that is, the product $a_1\cdots a_n/\ca(\large\E)^n$ does \emph{not} converge to $0$. Here, $\ca(\cdot)$ denotes the logarithmic capacity. Furthermore, \eqref{Szego} implies that
\begin{equation}
\label{Szego bound}
0<\liminf_{n\to\infty}\frac{a_1\cdots a_n}{\ca(\large\E)^n}\leq \limsup_{n\to\infty}\frac{a_1\cdots a_n}{\ca(\large\E)^n}<\infty.
\end{equation}
The above equivalence, which despite the ambiguity is called \emph{Szeg\H{o}'s theorem}, was proven by the author in the general setting of Parreau--Widom sets \cite{MR2855090}.

We mention is passing that the product $a_1\cdots a_n$ is the reciprocal of the leading coefficient in $P_n(x, d\mu)$. If $\large\E$ is rescaled to have logarithmic capacity $1$, the leading coefficients in the orthonormal polynomials are thus bounded above and below.
The core issue of the present paper is to be able to say much more about the asymptotic behavior of $P_n(x, d\mu)$ as well as the parameters $a_n$, $b_n$ for elements of the Szeg\H{o} class. In order to do so, we need the additional assumption of the direct Cauchy theorem (entering the field in Section \ref{sec3}).

As follows easily from the step-by-step sum rules and eigenvalue estimates proven in \cite{MR2855090}, the Szeg\H{o} class is closed under coefficient stripping. This means that the stripped matrix $J\vert_1:=\{a_{n+1},b_{n+1}\}_{n=1}^\infty$ belongs to $\Sz(\large\E)$ whenever $J=\{a_n, b_n\}_{n=1}^\infty$ does. Let us make a mental note of this simple fact.

We end this section by showing that $\Sz(\large\E)$ in a suitable way contains $\T$, the set of reflectionless matrices defined in the introduction. To be precise, we have the following
\begin{proposition}
\label{Sz contains T}
Let $\large\E\subset\R$ be a Parreau--Widom set. Suppose that $J'=\{a_n',b_n'\}_{n=-\infty}^\infty$ is reflectionless on $\large\E$
and that $\si(J')=\large\E$. Then the spectral measure of $J^+=\{a_n',b_n'\}_{n=1}^\infty$ belongs to the Szeg\H{o} class for $\large\E$.
\end{proposition}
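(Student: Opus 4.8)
The plan is to verify, one by one, the three conditions defining $\Sz(\large\E)$ for the spectral measure $d\mu^+$ of $J^+$. It is convenient to regard $J^+$ as the compression $PJ'P$ of $J'$ to $\ell^2(\{1,2,\dots\})$, with $P$ the orthogonal projection, and to write $J^-$ for the reflected compression to $\ell^2(\{0,-1,\dots\})$; decoupling the bond between sites $0$ and $1$ realises $J'$ as a rank-two, hence trace-class, perturbation of $J^-\oplus J^+$. Of the three conditions, (1) and (3) are the soft part, resting only on the reflectionless hypothesis, the numerical-range inclusion, and the Parreau--Widom condition; the Szeg\H{o} condition (2) carries the real content.

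For condition (3) I would argue that a mass point of $d\mu^+$ outside $\large\E$ is an eigenvalue of $J^+$, and since the numerical range of $J^+=PJ'P$ lies in that of the self-adjoint $J'$, namely $\cvh(\si(J'))=\cvh(\large\E)=[\al,\be]$, every such eigenvalue $x_k$ falls in some gap $(\al_j,\be_j)$. By the description of $\T$ in \cite{MR1674798} (each $J'\in\T$ has at most one point of $\si(J^+)$ in the closure of each gap), there is at most one mass point $x_j^+$ in each $(\al_j,\be_j)$, so $\sum_k g(x_k)\le\sum_j g(c_j)<\infty$ by \eqref{PW}, which is exactly \eqref{ev}. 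For condition (1), the trace-class perturbation and the regularity (hence perfectness) of $\large\E$ give $\si_\ess(J^+)\subseteq\si_\ess(J^-\oplus J^+)=\si_\ess(J')=\si(J')=\large\E$; for the reverse inclusion one uses that a whole-line Jacobi matrix reflectionless on $\large\E$ with spectrum $\large\E$ has purely absolutely continuous spectrum filling $\large\E$ at each site, so that $J^+$ inherits a.c.\ spectrum equal to $\large\E$ with density positive a.e.\ there --- concretely $f^+(t)=\tfrac1\pi\im m^+(t+i0)$ and $\im m^+(t+i0)>0$ for a.e.\ $t\in\large\E$. Granting $\si_\ess(J^+)=\large\E$, the a.c.\ part of $d\mu^+$ is carried by $\large\E$, so $f^+=0$ a.e.\ off $\large\E$; both halves of condition (1) are thus in place. (The positivity $\im m^+(t+i0)>0$ a.e.\ on $\large\E$ is again most cleanly extracted from the explicit description of $\T$ in \cite{MR1674798}, or from the solution-theoretic form of the reflectionless condition.)

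It remains to establish the Szeg\H{o} condition \eqref{Szego} for $d\mu^+$, and this is the step I expect to be the main obstacle. Since (1) and (3) hold, Szeg\H{o}'s theorem --- the equivalence recorded in \eqref{Sz thm} --- reduces \eqref{Szego} to the single assertion
\[
\limsup_{n\to\infty}\frac{a_1'\cdots a_n'}{\ca(\large\E)^n}>0 .
\]
For $J'\in\T$ this is part of the circle of results of Sodin and Yuditskii in \cite{MR1674798}: the generalized Jost function attached to $J^+$ exists as a bounded, non-vanishing, character-automorphic function on $\C\setminus\large\E$, and its asymptotic normalization forces $a_1'\cdots a_n'/\ca(\large\E)^n$ to stay bounded away from $0$ and $\infty$ (indeed to converge). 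It is precisely the Parreau--Widom hypothesis that makes this work --- it is what guarantees convergence of the infinite Blaschke products involved and their log-integrability against $d\mu_\E$. An alternative, closer to the spirit of the present paper, is to write $f^+$ on $\large\E$ as a positive constant times the equilibrium density $d\mu_\E/dt$ divided by the modulus on $\large\E$ of a Blaschke product with zeros at the critical points $c_j$ (one factor per gap) and at the eigenvalues $x_k$, and then read \eqref{Szego} off from $\sum_j g(c_j)<\infty$, $\sum_k g(x_k)<\infty$, and the $d\mu_\E$-log-integrability of the equilibrium density (itself a Parreau--Widom fact). Either way, the difficulty is concentrated in condition (2): while (1) and (3) use only reflectionlessness and \eqref{PW}, the Szeg\H{o} integrability of the density is inseparable from the character-automorphic machinery of \cite{MR1674798}.
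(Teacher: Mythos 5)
Your handling of conditions (1) and (3) is acceptable in outline, but the heart of the proposition --- the Szeg\H{o} condition (2) --- is not actually proved, and the two routes you sketch do not close it. Route (a) reduces \eqref{Szego} to $\limsup a_1'\cdots a_n'/\ca(\large\E)^n>0$ via \eqref{Sz thm} and then appeals to the ``asymptotic normalization of the generalized Jost function'' from \cite{MR1674798}; but the existence and normalization of such Jost-type functions for the half-line restriction is essentially equivalent to (indeed, in this paper is defined through) membership in the Szeg\H{o} class, so as stated the argument is circular, or at best an appeal to a black box that itself contains the content to be proved --- and parts of the Sodin--Yuditskii machinery you invoke require the direct Cauchy theorem, whereas the proposition is claimed for arbitrary Parreau--Widom sets. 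Route (b) is based on an incorrect formula: the a.e.\ relation on $\large\E$ is $2(a_0')^2 f^\pl(t)=\tfrac1\pi\bigl(\im G(t+i0)\bigr)^{-1}$, obtained by combining the reflectionless condition \eqref{refl m} with \eqref{m pm}; together with the product representation \eqref{G} this makes $f^\pl$ comparable to the \emph{reciprocal} of the equilibrium density times the ratio $\prod_j|t-c_j|/|t-x_j|$, where the $x_j$ are the zeros of $G$ in the gap closures --- not ``the equilibrium density divided by a Blaschke product with zeros at the $c_j$ and at the eigenvalues $x_k$.'' The actual proof is short once the identity above is in hand: since $-G\circ\cm$ has bounded characteristic on $\DD$ (\cite[Thm.~D]{MR1674798}) and $G$ has purely imaginary boundary values a.e.\ on $\large\E$, one gets \eqref{log H}, i.e.\ $\log\im G(t+i0)\in L^1(\large\E,d\mu_\E)$, and \eqref{Szego} follows immediately. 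This identity linking $f^\pl$ to $1/\im G$, together with the bounded-characteristic input, is the missing idea.

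Two smaller points. For (3), you cite \cite{MR1674798} for the fact that $J^+$ has at most one eigenvalue per gap; that is true, but it is exactly the nontrivial step, and it can be proved directly: by \eqref{m og G} and \eqref{m pm}, any pole of $m^\pl$ in $\C\setminus\large\E$ must be a zero of $G$, and $G$ has exactly one zero $x_j\in[\al_j,\be_j]$ per gap with $g(x_j)\le g(c_j)$, whence \eqref{ev} follows from \eqref{PW}. For (1), your assertion that a two-sided matrix reflectionless on $\large\E$ with spectrum $\large\E$ has \emph{purely} absolutely continuous spectrum is false in general (embedded singular parts can occur for general Parreau--Widom sets; this is precisely the issue behind the possible failure of injectivity of the divisor map discussed in Section \ref{sec3}); fortunately you only need $\im m^\pl(t+i0)>0$ a.e.\ on $\large\E$ together with the finite-rank decoupling, so this overstatement does not damage that part of the argument.
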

Before the proof, we introduce some notation and briefly discuss the underlying theory. A two-sided matrix $J'$ can be split in various ways. We shall make use of the standard splitting
\[
J'=\left( \begin{array}{ccccccc}
\vspace{-0.1cm}
\ddots & \ddots & \ddots & & & &  \\
\vspace{-0.25cm}
& a_{n-1}' & \hspace{-0.2cm} b_n' \hspace{-0.2cm} & \raisebox{-4pt}{\line(0,1){12}} & a_n' & & \\
\vspace{-0.1cm}
& \line(1,0){15} & \line(1,0){15} & & \line(1,0){15} & \line(1,0){15} & \\
\vspace{-0.1cm}
& & \hspace{-0.2cm} a_n' \hspace{-0.2cm} & \raisebox{-4pt}{\line(0,1){12}} & b_{n+1}' & a_{n+1}' \\
& & \phantom{a_n} &  & \hspace{-0.2cm}\ddots & \hspace{-0.2cm}\ddots & \ddots
\end{array} \right)
\]
and denote by $J_n^+=\{a_k',b_k'\}_{k=n+1}^\infty$ the lower right corner. Flipping the parameters in the upper left corner, we get the matrix $J_n^-=\{a_{n-k}',b_{n+1-k}'\}_{k=1}^\infty$. The reflectionless condition \eqref{reflectionless} is equivalent to
\begin{equation}
\label{refl m}
(a_n')^2 m_n^\pl(t+i0)\,\overline{m_n^\mn(t+i0)}=1
\,\mbox{ for a.e.\;$t\in\large\E$ and all $n$},
\end{equation}
where $\displaystyle{m_n^\plm}$ is the $m$-function for $J_n^\pm$ %, that is, the Stieltjes transform of the spectral measure $d\mu_n^\pm$
(see, e.g., \cite{MR2594337} or \cite[Chap.~7]{MR2743058}). This of course implies that one `side' of a reflectionsless matrix uniquely determines the other. It also follows that the set $\T$ is invariant under shifts and reflections of the Jacobi parameters. For simplicity, we henceforth write $J^\pm$, $m^\plm$, $\ldots$ instead of $J_0^\pm$, $m_0^\plm$, etc.

When $\si(J')=\large\E$, the diagonal (spectral theoretic) Green's function given by
\[
G(x):=G_{00}(x)=\bigl< \de_0, (J'-x)^{-1} \de_0\bigr>%, \quad x\notin\large\E
\]
is holomorphic in  $\C\setminus\large\E$ and decays like $-1/x$ at $\infty$.
%Since $G$ is also a Ne\-van\-linna--Pick function
%(i.e., $\im G(x)>0$ for $\im x>0$), there is a probability measure $d\nu$ on $\large\E$ so that
%\[
%G(x)=\int_\E\frac{d\nu(t)}{t-x}, \quad x\in\C\setminus\large\E.
%\]
If we also assume that $J'$ is reflectionless on $\large\E$ (i.e., $J'$ belongs to $\T$), there are unique points $x_j\in[\al_j,\be_j]$ such that $G$ can be written as
\begin{equation}
\label{G}
G(x)=\frac{-1}{\sqrt{(x-\al)(x-\be)}}\prod_j\frac{x-x_j}{\sqrt{(x-\al_j)(x-\be_j)}}.
\end{equation}
This follows directly from the exponential representation of $G$ (see, e.g., \cite{MR1027503} or \cite[Chap.~8]{MR1711536} for details). As will be crucial in a moment, $m^\pl$ and $m^\mn$ are related to $G$ by
\begin{equation}
\label{m pm}
(a_0')^2 m^\pl(x)-{1}/{m^\mn(x)}=-{1}/{G(x)}.
\end{equation}

Let $\cm:\DD\to\Om$ be the \emph{universal covering map}. We describe this map in greater detail in the next section and refer for now to \cite{MR1674798} (or \cite{MR2855090}). By \cite[Thm.~D]{MR1674798}, the function
\begin{equation}
\label{G fct}
\mathcal{G}(z):=-G\bigl(\cm(z)\bigr)
\end{equation}
has bounded characteristic in $\DD$. This in particular means that the angular boundary values $\mathcal{G}(e^{i\theta})$ exist for a.e.\;$\theta$ and $\log\bigl\vert \mathcal{G}(e^{i\theta})\bigr\vert$ belongs to $L^1(\partial\DD)$.
Since the equilibrium measure is preserved under the covering map and since $G$ has purely imaginary boundary values a.e.\;on {$\large\E$}, it hence follows that
\begin{equation}
\label{log H}
\log\bigl( \im G(t+i0) \bigr) \in L^1({\large\E}, d\mu_\E).
\end{equation}
We are now ready to present the proof of Proposition 2.1.
\begin{proof}
Since it is easy to see that $\si_\ess(J^+)=\large\E$, we merely focus on \eqref{Szego}--\eqref{ev}.
Let $d\mu'=f^\pl(t)dt+d\mu_\s'$ be the spectral measure of $J^+$ and recall that
\begin{equation*}
f^\pl(t)=\mfrac{1}{\pi}\im m^\pl(t+i0) \, \mbox{ a.e.\;on $\R$}.
\end{equation*}
According to \eqref{refl m}, we also have
\begin{equation*}
(a_0')^2 f^\pl(t)=-\mfrac{1}{\pi}\im\bigl({1}/{m^\mn(t+i0)}\bigr) \,\mbox{ for a.e.\;$t\in\large\E$}.
\end{equation*}
Hence, it follows from \eqref{m pm} that
\begin{align*}
2(a_0')^2 f^\pl(t)=\mfrac{1}{\pi}\im\bigl(-1/G(t+i0)\bigr)=\mfrac{1}{\pi}\bigl(\im G(t+i0)\bigr)^{-1}
\, \mbox{ a.e.\;on $\large\E$}
\end{align*}
and by \eqref{log H}, this shows that the Szeg\H{o} condition is satisfied.

Because of \eqref{PW}, we automatically have $\sum_j g(x_j)<\infty$. So to prove that the Blaschke condition holds true, it suffices to show that $m^\pl$ can only have poles (in $\C\setminus\large\E$) at the zeros of $G$.  Using the Stieltjes expansion for $m^\pl$ and $m_1^\mn$, one can show that
\begin{equation}
\label{m og G}
m^\pl(x)G_{00}(x)=m^\mn(x)G_{11}(x).
\end{equation}
Since $G_{nn}$ is holomorphic in $\C\setminus\large\E$ for all $n$, a pole of $m^\pl$ which is \emph{not} a zero of $G_{00}$ $(=G)$ must also be a pole of $m^\mn$ and hence a zero of $1/m^\mn$. By \eqref{m pm}, it follows that any pole of $m^\pl$ in $\C\setminus\large\E$ is indeed a zero of $G$.
\end{proof}

Recalling that $\T$ is shift and reflection invariant, we immediately get the following
\begin{corollary}
When $J'$ lies in $\T$, all of the matrices $J_n^+$ and $J_n^-$ belong to the Szeg\H{o} class for $\large\E$.
\end{corollary}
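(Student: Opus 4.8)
The plan is to read this off directly from Proposition \ref{Sz contains T}, using the shift- and reflection-invariance of $\T$ noted just above it. First I would fix $n\in\Z$ and form the two-sided matrix $\widehat J=\{a_{k+n}',b_{k+n}'\}_{k=-\infty}^\infty$ obtained by shifting all parameters of $J'$ by $n$ steps. Since $\T$ is shift invariant, $\widehat J$ again lies in $\T$; in particular it is reflectionless on $\large\E$ and has spectrum $\large\E$. Its restriction to $\ell^2(\N)$ is $\widehat J^+=\{a_{k+n}',b_{k+n}'\}_{k=1}^\infty$, which is precisely the matrix $J_n^+$. Applying Proposition \ref{Sz contains T} to $\widehat J$ in place of $J'$ then shows that the spectral measure of $J_n^+$ belongs to $\Sz(\large\E)$.

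For the flipped matrices $J_n^-=\{a_{n-k}',b_{n+1-k}'\}_{k=1}^\infty$ I would argue in exactly the same way, this time invoking reflection (and shift) invariance: reflecting the parameters of $J'$ about the $n$th site produces a two-sided matrix that again belongs to $\T$ and whose half-line restriction is precisely $J_n^-$, so a second application of Proposition \ref{Sz contains T} gives that the spectral measure of $J_n^-$ lies in $\Sz(\large\E)$ as well. There is essentially no obstacle here beyond bookkeeping with the indices — the only thing to verify is that the $\ell^2(\N)$-restriction of a shifted (resp.\ reflected) element of $\T$ coincides with $J_n^+$ (resp.\ $J_n^-$) as defined above — after which the corollary is immediate.
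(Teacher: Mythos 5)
Your argument is exactly the one the paper intends: the corollary is stated immediately after the remark that $\T$ is shift and reflection invariant, and the proof is precisely to apply Proposition \ref{Sz contains T} to the shifted (resp.\ reflected) element of $\T$ whose half-line restriction is $J_n^+$ (resp.\ $J_n^-$). The index bookkeeping in your proposal checks out, so nothing further is needed.
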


\section{The Abel map and Jost functions}
\label{sec3}

In this section, we take a closer look at the structure of the set $\T$ of reflectionless matrices.
Throughout, we shall equip $\T$ with the strong operator topology (which in this case coincides with the weak topology).
The key result to be explained states that $\T$ is homeomorphic to a torus of dimension equal to the number of gaps in $\large\E$ whenever the Parreau--Widom condition \eqref{PW} is satisfied and the direct Cauchy theorem holds on $\C\setminus\large\E$. Hence, we shall also refer to $\T$ as the \emph{isospectral torus}. A major role is played by the Jost function, a character auto\-mor\-phic function defined for all elements of the Szeg\H{o} class. The above homeomorphism can be described explicitly through the character of this analytic function. We end the section with a short discussion of the situation when the direct Cauchy theorem fails.

Let $\large\E\subset\R$ be a compact set as in \eqref{E}. Following \cite{MR1674798}, we denote by $\mathcal{D}_\E$ the set of  \emph{divisors}, that is, formal sums
\[
D=\sideset{}{_j}\sum (y_j, \ep_j) \; \mbox{ with $y_j\in[\al_j, \be_j]$ and $\epsilon_j=\pm 1$}.
\]
Here, $(y_j,+1)$ and $(y_j,-1)$ are identified when $y_j$ is equal to $\al_j$ or $\be_j$. In topological terms, we can think of $\mathcal{D}_\E$ as a product of circles and shall equip this torus (of dimension equal to the number of gaps in $\large\E$) with the product topology.

Recalling the representation \eqref{G}, we can now define a map from $\T$ to $\mathcal{D}_\E$. All that is needed is to describe the image of $x_j$ for every $j$:
\begin{itemize}
\item[(1)]
If $x_j=\al_j$ or $\be_j$, we map $x_j\mapsto(\al_j,\pm 1)$, respectively $(\be_j,\pm 1)$.
\vspace{0.1cm}
\item[(2)]
If $x_j\in(\al_j,\be_j)$, then it is either a pole of $m^\pl$ or $1/m^\mn$ by \eqref{m pm}. Because of \eqref{m og G}, it cannot be a pole of both. When $x_j$ is a pole of $m^\pl$, we map $x_j\mapsto (x_j,+1)$ and when $x_j$ is a zero of $m^\mn$, we map $x_j\mapsto (x_j, -1)$.
\end{itemize}
The map defined by (1)--(2) is always continuous and surjective. However, it need not be one-to-one. Given $J'\in\T$, the function $H:=-1/G$ has an integral representation of the form
\begin{equation}
\label{H}
H(x)=x+A+\int_\R\frac{d\rho(t)}{t-x}
\end{equation}
for some constant $A\in\R$ and some positive measure $d\rho$ with compact support. Since $J'$ determines $d\rho$ uniquely, we also write $d\rho_{J'}$ to emphasize this dependence. The map from $\T$ onto $\mathcal{D}_\E$ is injective if and only if $d\rho_{J'}$ has \emph{no} singular part on $\large\E$ (i.e., $d\rho_{J'}$ is absolutely continuous on $\large\E$) for all $J'\in\T$. We refer to \cite{MR2504863,MR2824839,MR3027547} for details and just mention that the map is always a homeomorphism when $\large\E$ is so-called \emph{weakly homogeneous}, that is,
\begin{equation}
\label{w hom}
\limsup_{\de\da 0}\frac{\bigl\vert \large\E\cap(t-\de,t+\de)\bigr\vert}{2\de}>0
\; \mbox{ for all $t\in\large\E$.}
\end{equation}
This requirement is strictly weaker than Carleson's condition \cite{MR730079} for $\large\E$ to be \emph{homogeneous} (i.e.,
there exists $\eps>0$ such that $\vert\large\E\cap (t-\de, t+\de)\vert\geq\de\eps$ for all $t\in\large\E$ and all $\de<\diam(\large\E)$).

We shall now introduce yet another torus associated with the set $\large\E$. Let $\cm: \DD\to\Om$ be the universal covering map (which is onto but only locally one-to-one) and let $\Ga_\E$ be the associated Fuchsian group of M\"{o}bius transformations on $\DD$. Then, by construction,
\begin{equation}
\label{cover}
\cm(z)=\cm(w) \,\iff\, \exists\ga\in\Ga_\E: \, z=\ga(w)
\end{equation}
and in order to fix $\cm$ uniquely, we require that
\[
\cm(0)=\infty, \quad \lim_{z\to 0} z\cm(z)>0.
\]
Note that $\Ga_\E$ is isomorphic to the fundamental group $\pi_1(\Om)$ and hence a free group on as many generators as the number of gaps in $\large\E$.
A map $\chi:\Ga_\E\to\partial\DD$ with the property that
\begin{equation}
\label{gamma}
\chi(\ga_1 \ga_2)=\chi(\ga_1)\chi(\ga_2)
\, \mbox{ for all $\ga_1, \ga_2$}%\in\Ga_\E$}
\end{equation}
is called a \emph{unimodular character} on $\Ga_\E$. We denote by $\Ga^{\,*}_\E$ the multiplicative group of all such characters.
Since every $\chi$ satisfying \eqref{gamma} is determined by its values on the generators of $\Ga_\E$,
we can think of $\Ga^{\,*}_\E$ as a torus of dimension equal to the number of gaps in $\large\E$. This torus is a compact Abelian group when equipped with the topology dual to the discrete one (on $\Ga_\E$).

Following \cite{MR1674798}, one can define a continuous map from $\mathcal{D}_\E$ onto $\Ga^{\,*}_\E$, the so-called \emph{Abel map}. Traditionally this is done via Abelian integrals but here we shall use a different, yet equivalent, definition which is based on Jost functions.
In order to define the Jost function for measures in the Szeg\H{o} class, we first recall the notion of Blaschke products.

As $\Ga_\E$ is of convergent type (see, e.g., \cite{MR0414898}), the Blaschke products defined by
\begin{equation}
\label{Blaschke}
B(z,w)=\prod_{\ga\in\Ga_\E}\frac{\vert\ga(w)\vert}{\ga(w)}
\frac{\ga(w)-z}{1-\overline{\ga(w)}z}
\end{equation}
converge for all $z, w\in\DD$. Evidently, $B(\,\cdot\,, w)$ is analytic on $\DD$ and it has a simple zero at $\ga(w)$ for every $\ga\in\Ga_\E$. Since $\vert B(\,\cdot\,, w)\vert$ is invariant under the action of $\Ga_\E$, it follows that $B(\,\cdot\,, w)$ is \emph{character automorphic}, that is,
\begin{equation}
\label{B char}
B(\ga(z),w)=\chi_{\substack{ \\ w}}(\ga)\,B(z,w)
\end{equation}
for some character $\chi_{\substack{ \\ w}}$ in $\Ga_\E^{\,*}$.
When $w=0$, we write $B(z)$ for the Blaschke product in \eqref{Blaschke} and recall the important link
\begin{equation}
\label{B and Green}
g\bigl(\cm(z)\bigr)=-\log\vert B(z)\vert,
\end{equation}
where $g$ is the potential theoretic Greens's function (cf. Section \ref{sec2}). %for $\Om$ with pole at $\infty$.

Now, fix a fundamental set $\mathbb{F}$ for $\Ga_\E$ (i.e., a subset of $\DD$ which contains one and only one point of each $\Ga_\E$-orbit). A canonical choice is the Ford fundamental region or the Dirichlet region
\[
\bigl\{z\in\DD\,:\, \vert z\vert\leq\vert\ga(z)\vert \mbox{ for all $\ga\in\Ga_\E$}\bigr\}
\]
with `half' of the boundary removed (e.g., the orthocircles in the lower half-plane).
%To every point $x\in\Om$ there is exactly one $w\in\mathbb{F}$ so that $\cm(w)=x$.
Let $w_j$ be the unique point in $\mathbb{F}$ for which $\cm(w_j)=c_j$, where $c_j$ is the critical point of $g$ in the interval $(\al_j,\be_j)$. Because of \eqref{PW}, the product
\begin{equation}
\label{c}
c(z):=\medop{\sideset{}{_j}\prod} B(z,w_j)
\end{equation}
converges to a character automorphic function on $\DD$. We denote by $\chi_{\substack{\\ c}}$ the associated character.
%Interestingly, the function in \eqref{c} is closely related to the equilibrium measure of $\large\E$. When $x_j=c_j$ for all $j$, the product in \eqref{G} coincides with $m_\E$, the Stieltjes transform of $d\mu_\E$. Moreover (see, e.g., \cite{MR2855090}), it follows that $m_\E\circ\cm$ admits the factorization
%\begin{equation}
%\label{factor equi}
%-m_\E\bigl( \cm(z)\bigr)=B(z)c(z)\exp\biggl\{\int_0^{2\pi}\frac{e^{i\theta}+z}{e^{i\theta}-z}
%\log\Bigl(\pi f_\E\bigl(\cm(e^{i\theta})\bigr)\Bigr)\frac{d\theta}{2\pi}\biggr\},
%\end{equation}
%where $f_\E$ is the a.c.\;density of $d\mu_\E$. In particular, $\log f_\E\bigl(\cm(e^{i\theta})\bigr)$ belongs to $L^1(\partial\DD)$.
\begin{definition}
Let $d\mu=f(t)dt+d\mu_\s$ be a measure in the Szeg\H{o} class for $\large\E$ and denote by $\{x_k\}$ the isolated mass points in $\R\setminus\large\E$. The \emph{Jost function} for $d\mu$ is defined by
\begin{equation}
\label{Jost}
u(z;d\mu)=\prod_k B(z,p_k)\exp\biggl\{-\int_0^{2\pi}\frac{e^{i\theta}+z}{e^{i\theta}-z}
\log\Bigl(\pi f\bigl(\cm(e^{i\theta})\bigr)\Bigr)\frac{d\theta}{4\pi}\biggr\}, \quad z\in\DD,
\end{equation}
where $p_k$ is the unique point in $\mathbb{F}$ such that $\cm(p_k)=x_k$.
\end{definition}
Due to \eqref{ev}, the \emph{Blaschke part} of $u$ converges for all $z\in\DD$. Moreover, the Szeg\H{o} condition \eqref{Szego} ensures that the integral on the right-hand side of \eqref{Jost} converges. Hence the exponential defines an outer function on $\DD$, called the \emph{Szeg\H{o} part} of $u$. Since the Blaschke and Szeg\H{o} parts are both analytic and character automorphic on $\DD$ (see, e.g., \cite{MR2659747} for details), so is $u$.

For $J'\in\T$, we denote by $u(\,\cdot\,;J')$ the Jost function for the spectral measure of $J^+$\;(i.e., the measure called $d\mu'$ in Section \ref{sec2}). Furthermore, we let $\chi(J')$ denote the associated character. It should be pointed out that $u(\,\cdot\,;J')$ is closely related to the function $k^\al$ in \cite{MR1674798}. When $\chi(J')=\al$, these two functions merely differ by a fixed character automorphic function which is independent of $d\mu'$ (but heavily depends on the equilibrium measure $d\mu_\E$). Although one may argue that it is more natural to also divide by $a_0'$, we stick for simplicity to the expression in \eqref{Jost}. Our choice of the Jost function does \emph{not} include a reference measure, as opposed to \cite{MR2659747,MR2784484}.

The seminal paper \cite{MR1674798} of Sodin and Yuditskii proved the following key result:
\begin{theorem}
\label{SY}
The map
\begin{equation}
\label{Jost iso}
\T\ni J' \longmapsto \chi(J')\in\Ga_\E^{\,*}
\end{equation}
is a homeomorphism (equivalently, the Abel map is one-to-one) if and only if the {direct Cauchy theorem} holds on $\C\setminus\large\E$.
\end{theorem}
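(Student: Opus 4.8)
The plan is to factor the map \eqref{Jost iso} as $\mathcal{A}\circ\pi$, where $\pi\colon\T\to\mathcal{D}_\E$ is the divisor map (reading $\sum_j(x_j,\ep_j)$ off the representation \eqref{G}, with the sign $\ep_j$ fixed by \eqref{m pm}--\eqref{m og G}) and $\mathcal{A}\colon\mathcal{D}_\E\to\Ga_\E^{\,*}$ is the Abel map, and then to decide when this composite is bijective. Since $\T$, $\mathcal{D}_\E$, $\Ga_\E^{\,*}$ are compact and metrizable, any continuous bijection among them is automatically a homeomorphism, so only bijectivity is at issue. Both $\pi$ and $\mathcal{A}$ are continuous and surjective --- for $\pi$ this is the fact noted after the definition (1)--(2) above; for $\mathcal{A}$ it is classical (and also drops out of the construction in the next paragraph), with continuity coming from convergence of the Blaschke products \eqref{Blaschke} and of the Poisson integral in \eqref{Jost} under weak convergence of the spectral data --- and in the Jost-function description of $\mathcal{A}$ one has $\mathcal{A}\circ\pi=\chi$. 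Because $\pi$ is onto, $\mathcal{A}\circ\pi$ is injective precisely when $\pi$ and $\mathcal{A}$ are each injective, so the whole statement reduces to: \emph{$\pi$ and $\mathcal{A}$ are both injective if and only if the direct Cauchy theorem holds}.

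To handle surjectivity, and to set up the injectivity argument, I would bring in the Hardy-space machinery of \cite{MR1674798}. Given a character $\al$, form the space $H^2(\al)$ of character-automorphic analytic functions on $\DD$ with character $\al$ and square-integrable boundary values, and let $k_\al$ be its reproducing kernel. Suitably normalizing $k_\al(\,\cdot\,,0)$ --- and incorporating the fixed equilibrium-dependent character-automorphic factor that relates our Jost function to the function $k^\al$ of \cite{MR1674798} --- produces an outer-times-Blaschke function $u_\al$ which is the Jost function of a reflectionless matrix $J'_\al$ with $\si(J'_\al)=\large\E$ and Jost character $\al$: one recovers $f^\pl$ from $|u_\al|^2$ on $\partial\DD$, the isolated mass points from the zeros of $u_\al$ in $\F$, and then $m^\pl$, $m^\mn$, $J^+$ and all of $J'_\al$ via \eqref{refl m} and \eqref{m pm}. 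This gives surjectivity of $\mathcal{A}\circ\pi$; and when $k_\al(0,0)>0$ for every $\al$ --- one of the standard equivalent forms of the direct Cauchy theorem --- the assignment $\al\mapsto J'_\al$ is continuous.

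For the equivalence, assume first that the direct Cauchy theorem holds. Then, classically (Widom--Hasumi), $u_\al$ is the \emph{unique} solution of its extremal problem for every $\al$; equivalently $\mathcal{A}$ is injective, and the measure $d\rho_{J'}$ in \eqref{H} is purely absolutely continuous on $\large\E$ for every $J'\in\T$, so $\pi$ is injective too. Concretely: if $\chi(J_1')=\chi(J_2')=\al$, then $u(\,\cdot\,;J_1')$ and $u(\,\cdot\,;J_2')$ are both (up to the fixed equilibrium factor) the extremal function of $H^2(\al)$, hence equal; from this common Jost function one reads off $f^\pl$ and the isolated mass points, and \eqref{refl m}--\eqref{m pm} then pin down the spectral measure of $J^+$, so $J_1^+=J_2^+$ and therefore $J_1'=J_2'$. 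Conversely, if the direct Cauchy theorem fails there is a character whose fiber under $\mathcal{A}$ is a nontrivial continuum --- a singular part of $d\rho_{J'}$ on $\large\E$ can be traded continuously against mass located in a gap without changing $\chi$ --- so $\mathcal{A}\circ\pi$ fails to be injective and \eqref{Jost iso} is not a homeomorphism.

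I expect the real obstacle to be this last equivalence, and within it the direction in which \emph{failure} of the direct Cauchy theorem forces non-injectivity; this is not formal. It rests on the function theory of Parreau--Widom (Widom) domains --- the identification of the direct Cauchy theorem with non-degeneracy of the kernels $k_\al$ and with the extremal function of $H^2(\al)$ being the outer-times-Blaschke function one expects --- together with the explicit production of a nontrivial Abel fiber when this fails. A subsidiary point needing care is matching the Jost-function Abel map with the classical one built from Abelian integrals, and verifying that $u(\,\cdot\,;J')$ really is the extremal element of $H^2(\chi(J'))$; this is where the Parreau--Widom condition \eqref{PW}, the integrability \eqref{log H} of $\log(\im G)$, and the Szeg\H{o} condition \eqref{Szego} get used.
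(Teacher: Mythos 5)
First, a point of comparison: the paper does not prove this theorem at all --- it is quoted from Sodin--Yuditskii \cite{MR1674798}, so there is no internal proof to measure your argument against. Your outline does follow the general architecture of that source: factor $\chi=\mathcal{A}\circ\pi$ through the divisor space $\mathcal{D}_\E$, realize Jost functions as normalized reproducing kernels (extremal functions) of the character-automorphic Hardy spaces, obtain surjectivity from that construction, and get the homeomorphism for free from compactness once bijectivity is known. As a plan this is sound, and the reduction ``composite injective iff $\pi$ and $\mathcal{A}$ both injective'' is correct since $\pi$ is onto.

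The genuine gap is in the equivalence with the direct Cauchy theorem, above all in your converse direction. You propose that when the direct Cauchy theorem fails, injectivity breaks because ``a singular part of $d\rho_{J'}$ on $\E$ can be traded continuously against mass located in a gap.'' But Section \ref{sec3} of the paper notes explicitly that the direct Cauchy theorem is \emph{strictly stronger} than absolute continuity of $d\rho_{J'}$ on $\E$ for all $J'\in\T$; hence there are Parreau--Widom sets on which $\pi$ is injective (no singular part ever occurs) and yet the direct Cauchy theorem fails. In that regime your mechanism produces nothing, and the loss of injectivity must instead be located in the Abel map $\mathcal{A}$ itself, via the discrepancy between the Hardy spaces $\check{H}^2(\al)$ and $\hat{H}^2(\al)$: when these differ for some character $\al$, one obtains several distinct reflectionless $J'$ with the same Jost character (this is exactly the set $\Te$ discussed in Section \ref{sec3} and analyzed in \cite{VY}). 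The forward direction --- that the direct Cauchy theorem forces uniqueness of the extremal function, hence injectivity --- is likewise asserted with a pointer to ``Widom--Hasumi'' rather than argued, and this is precisely the nontrivial function-theoretic core of \cite{MR1674798}; it cannot be treated as formal. A subsidiary unverified step is that your Jost function $u(\,\cdot\,;J')$, which differs from the kernel function $k^\al$ of \cite{MR1674798} by a fixed equilibrium-dependent character-automorphic factor, induces the \emph{same} character assignment as the classical Abelian-integral Abel map, which is needed to justify the parenthetical ``equivalently'' in the statement.
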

Here, the {direct Cauchy theorem} is the statement that
\begin{equation}
\label{DCT}
\int_0^{2\pi}\frac{\varphi(e^{i\theta})}{c(e^{i\theta})}\frac{d\theta}{2\pi}
=\frac{\varphi(0)}{c(0)}
\end{equation}
whenever $\varphi\in H^1(\DD)$ is character automorphic and $\chi_{\substack{\\ \varphi}}=\chi_{\substack{\\ c}}$.
The monograph \cite{MR723502} of Hasumi gives several conditions each of which are equivalent to \eqref{DCT}. Unfortunately, there seems to be no simple geometric condition on $\large\E$ that can tell whether or not the direct Cauchy theorem holds.
It is known (see, e.g., \cite{MR871721}) that there are Parreau--Widom sets for which the direct Cauchy theorem fails. On one hand, the direct Cauchy theorem is a stronger restriction than $d\rho_{J'}$ being absolutely continuous on $\large\E$ for all $J'\in\T$. But on the other hand, it is automatically satisfied when $\large\E$ is homogeneous. The reader is referred to \cite{MR2858955,MR723502} for examples and an in-depth analysis.

As we will show in Section \ref{sec5}, the left shift on $\T$ corresponds (under the Abel map) to multiplying by the inverse of the character of $B$ on $\Ga_\E^{\,*}$. Hence, since the map of picking out a single Jacobi parameter, say $a_0'$ or $b_0'$, is continuous on $\T$, the parameter sequences $n\mapsto a_n', \, b_n'$ are uniformly almost periodic when the direct Cauchy theorem holds.
Specifically, for each $J'\in\T$ the Jacobi parameters can be obtained by evaluating a continuous function at the points $\chi_{\substack{\\ 0}}^{-n}\chi(J')$ in $\Ga_\E^{\,*}$.

We finally discuss the situation when the direct Cauchy theorem fails. In that case, Theorem \ref{JSC} no longer applies. But to which extend can we still describe the right limits of a matrix $J$ in the Szeg\H{o} class? Following \cite{VY}, we introduce the set
\[
\Te=\bigl\{\chi\in\Ga_\E^{\,*}\, : \, \chi(J')=\chi(K')=\chi \,\centernot\Longrightarrow\, J'=K' \mbox{ in $\T$} \bigr\}.
\]
That is, we single out those characters $\chi$ that correspond to more than one $J'$ in $\T$. For a wide range of Parreau--Widom sets (see, e.g., \cite{VY} for precise conditions on $\large\E$), this subset of $\Ga_\E^{\,*}$ has Haar measure zero.

For $J\in\Sz(\large\E)$, we denote by $\chi(J)$ the character of the associated Jost function. Suppose that for some subsequence $m_l\to\infty$, the characters $\chi(J)\chi_{\substack{\\ 0}}^{-m_l}$ converge to $\chi\in\Ga_\E^{\,*}$. When the direct Cauchy theorem fails, we cannot conclude that $J_{m_l}$ has a limit. But if it does, say
\[
J_{m_l}\xrightarrow[]{\,str.\,}J' \mbox{  as $l\to\infty$},
\]
then $\chi(J')$ must coincide with $\chi$ (by Theorem \ref{Jost asympt} below). A short compactness argument therefore shows that $J_{m_l}$ has a limit whenever $\chi\notin\Te$.

If $\chi\in\Te$, however, there are several $J'\in\T$ so that $\chi(J')=\chi$. In the language of \cite{VY}, we have such a $J'$ for each of the Hardy spaces between $\check{H}^2(\chi)$ and $\hat{H}^2(\chi)$. It seems unlikely that $J_{m_l}$ should have a limit in this situation, but we can always pass to convergent subsequences.
A natural question in this connection, not to be addressed here, is the following: Is it possible to obtain \emph{any} $J'$ with character $\chi$ by passing to a suitable subsequence or can we only obtain \emph{some}? If we impose extra assumptions on $\large\E$ as in \cite{VY}, the sequence $\{ \chi(J) \chi_{\substack{\\ 0}}^{-n} \}_{n\in\Z}$ is den\-se in $\Ga_\E^{\,*}$. So for every $\chi\in\Ga_\E^{\,*}$, at least one $J'$ with character $\chi$ is indeed a right limit of $J$.

\section{Jost asymptotics for right limits}
\label{sec4}

The aim is now to formulate and prove the main technical result of the paper.
Not only will this theorem on Jost asymptotics be the key to proving \eqref{limit},
it will also be an important step towards establishing po\-ly\-nomial asymptotics.

Throughout the section, $\large\E\subset\R$ will denote an arbitrary Parreau--Widom set. If $J$ has spectral measure $d\mu$, then we write $d\mu_m=f_m(t)dt+d\mu_{m,\s}$ for the spectral measure of the stripped matrix $J\vert_m$ (i.e., the restriction to $\ell^2(\N)$ of the shifted matrix $J_m$).

\begin{theorem}
\label{Jost asympt}
Let $d\mu$ be a measure in $\Sz(\large\E)$ and $J$ the associated Jacobi matrix.
If $J_{m_l}\xrightarrow[]{\,str.\,} J'$ along some subsequence $m_l\to\infty$, then
\begin{equation}
\label{Jost asymp}
u(z;d\mu_{m_l}) \ra u(z;J') \; \mbox{ as $l\to\infty$}
\end{equation}
uniformly on compact subsets of $\DD$. In particular,
\begin{equation}
\label{char conv}
\chi(J\vert_{m_l})\to\chi(J') \; \mbox{ as $l\to\infty$}.
\end{equation}
\end{theorem}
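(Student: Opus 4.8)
The plan is to prove \eqref{Jost asymp} directly from the defining formula \eqref{Jost} for the Jost function, establishing convergence of the Blaschke part and the Szeg\H{o} part separately, and then deduce \eqref{char conv} by evaluating both sides of \eqref{Jost asymp} at a $\Ga_\E$-translate of the base point. Throughout I would exploit that the Szeg\H{o} class is closed under coefficient stripping and, more importantly, that the step-by-step sum rules of \cite{MR2855090} give \emph{uniform} control: the Szeg\H{o} integrals $\int_\E \log f_{m_l}\,d\mu_\E$ and the eigenvalue sums $\sum_k g(x_k^{(m_l)})$ stay bounded (indeed they are monotone in a suitable sense under stripping), so the family $\{u(\,\cdot\,;d\mu_{m_l})\}$ is a normal family on $\DD$. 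This reduces the theorem to identifying every locally uniform subsequential limit with $u(\,\cdot\,;J')$.

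First I would handle the pointwise data. Since $J_{m_l}\to J'$ strongly, the $m$-functions converge: $m_{m_l}(x)\to m^+(x)$ locally uniformly on $\C\setminus\cvh(\large\E)$ (this is standard for strong resolvent convergence of uniformly bounded Jacobi matrices), hence the measures $d\mu_{m_l}$ converge weakly to $d\mu'$, the spectral measure of $J^+$. The isolated mass points of $d\mu_{m_l}$ in a gap converge to those of $d\mu'$, so after relabelling the Blaschke points $p_k^{(m_l)}\in\mathbb F$ converge to the corresponding points for $J'$; combined with the uniform Blaschke bound from \eqref{ev}/\eqref{PW} and the fact that $B(z,\cdot)$ depends continuously on its second argument, the Blaschke parts converge locally uniformly. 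For the Szeg\H{o} part the subtle point is that weak convergence of measures does \emph{not} by itself give convergence of $\int (e^{i\theta}+z)(e^{i\theta}-z)^{-1}\log(\pi f_{m_l}(\cm(e^{i\theta})))\,d\theta$; this is where the real work lies.

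The hard part will be upgrading weak convergence of $d\mu_{m_l}\to d\mu'$ to convergence of the \emph{Szeg\H{o} functions}, i.e. showing $\int \log f_{m_l}\circ\cm \to \int \log f'\circ\cm$ against the Poisson--Herglotz kernel. The natural route is entropy/semicontinuity: weak convergence plus the upper-semicontinuity of the map $\nu\mapsto -\int\log(d\nu_{\rm ac}/d\mu_\E)\,d\mu_\E$ gives $\liminf \ge$ the right thing in one direction, and the step-by-step sum rules of \cite{MR2855090} — which express the running Szeg\H{o} integral as a telescoping sum of nonnegative terms involving $a_n/\ca(\large\E)$, $b_n$, and the eigenvalue motion — give the matching bound in the other direction once one knows, by Theorem \ref{DRR}, that the limit $J'$ is reflectionless and its $J^+$ lies in $\Sz(\large\E)$ (Proposition \ref{Sz contains T}), so that no Szeg\H{o} mass is lost in the limit. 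This forces $\log f_{m_l}\to\log f'$ in $L^1(\large\E,d\mu_\E)$ (not merely weakly), which suffices for convergence of the outer functions. Combining this with the Blaschke convergence identifies the subsequential limit of $u(\,\cdot\,;d\mu_{m_l})$ as $u(\,\cdot\,;J')$, proving \eqref{Jost asymp}.

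Finally, \eqref{char conv} is immediate: fix any generator $\ga$ of $\Ga_\E$ and a point $z_0\in\DD$ with $u(z_0;J')\neq 0$ (such $z_0$ exists since $u(\,\cdot\,;J')$ is not identically zero — its Szeg\H{o} part is outer and nonvanishing). From $u(\ga z_0;d\mu_{m_l}) = \chi(J\vert_{m_l})(\ga)\,u(z_0;d\mu_{m_l})$ and the locally uniform convergence \eqref{Jost asymp} at the two points $z_0$ and $\ga z_0$, we get $\chi(J\vert_{m_l})(\ga)\to u(\ga z_0;J')/u(z_0;J') = \chi(J')(\ga)$; doing this for each of the finitely or countably many generators and using that $\Ga_\E^{\,*}$ carries the topology of pointwise convergence on generators yields $\chi(J\vert_{m_l})\to\chi(J')$.
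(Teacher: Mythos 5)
Your architecture is the same as the paper's: treat the Blaschke part and the Szeg\H{o} (outer) part of $u(\,\cdot\,;d\mu_{m_l})$ separately, get the Blaschke part from convergence of eigenvalues plus a uniform tail bound, and get the outer part from weak convergence $d\mu_{m_l}\xrightarrow{\,w\,}d\mu'$ combined with convergence of the relative entropies $S(d\mu_{m_l})\to S(d\mu')$, where semicontinuity supplies one inequality and sum rules must supply the other (this is exactly the Simon--Zlato\v{s} route the paper takes in Proposition \ref{Szego parts}). Your derivation of \eqref{char conv} from \eqref{Jost asymp} by evaluating at $z_0$ and $\ga(z_0)$ is correct.

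The genuine gap is the step you summarize as ``the step-by-step sum rules give the matching bound in the other direction once one knows \dots\ that $J^+$ lies in $\Sz(\E)$, so that no Szeg\H{o} mass is lost in the limit.'' Knowing $S(d\mu')<\infty$ does not produce the inequality $\liminf_l S(d\mu_{m_l})\geq S(d\mu')$; that inequality is the whole content of the hard direction, and the sum rules cannot be applied to it directly because a step-by-step sum rule only compares a measure with its \emph{own} strippings, whereas $d\mu'$ is not a stripping of any $d\mu_{m_l}$. The paper bridges this by an explicit construction: pass to a subsequence realizing the $\liminf$, form the interpolating matrices $J_{n|q}$ whose upper-left block is the block of $J$ between positions $m_{l(n)}$ and $m_{l(q)}$ and whose tail is $J^+$ itself, and apply the iterated sum rule between $J_{n|q}$ and $J^+$ (legitimate since $J_{n|q}$ strips down to $J^+$ in finitely many steps). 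One must then control the eigenvalue sums $\sum_k g(x_{n|q,k})$ uniformly in $q$ (via a rank-one splitting/interlacing argument), let $q\to\infty$ first and $n\to\infty$ second, and invoke the Blaschke-part convergence at $z=0$ to identify the limiting eigenvalue sums. Without some such device your outline does not close. A smaller but related soft spot: the uniformity in $l$ of the Blaschke tail estimate needs the fact that stripping interlaces eigenvalues, so that \eqref{ev} for $d\mu$ yields bounds on the eigenvalue sums of \emph{all} the $J\vert_{m_l}$ simultaneously; citing \eqref{ev}/\eqref{PW} for $d\mu$ alone does not give this.
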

We point out that the right limit $J'$ automatically belongs to $\T$.
This is an immediate consequence of the Denisov--Rakhmanov--Remling theorem.

The proof of the theorem is divided into two steps. We start by showing that the Blaschke parts converge and then we show that also the Szeg\H{o} parts converge. As for notation, let $\{x_k\}$ be the isolated mass points of $d\mu$ outside $\large\E$.
Moreover, denote by $\{x_{l,k}\}$ and $\{x_j^\pl\}$ the eigenvalues in $\R\setminus\large\E$ of $J\vert_{m_l}$, respectively $J^+=\{a_n', b_n'\}_{n=1}^\infty$.
Finally, fix a fundamental set $\mathbb{F}$ for $\Ga_\E$ and take $p_{l,k}$ and $p_j^\pl$ to be the unique points in $\mathbb{F}$ so that $\cm(p_{l,k})=x_{l,k}$ and $\cm(p_j^\pl)=x_j^\pl$.
\begin{proposition}
\label{B conv}
Under the hypothesis of Theorem \ref{Jost asympt} and with notation as above, we have
\begin{equation}
\label{Blaschke conv}
\sideset{}{_k}\prod B(z, p_{l,k}) \longrightarrow \sideset{}{_j}\prod B(z, p_{j}^\pl) \; \mbox{ as $l\to\infty$}
\end{equation}
locally uniformly on $\DD$.
\end{proposition}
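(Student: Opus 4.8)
The plan is a normal-families argument. Put $b_l(z):=\prod_k B(z,p_{l,k})$ and $b_\infty(z):=\prod_j B(z,p_j^\pl)$; these are analytic, character automorphic, and bounded by $1$ on $\DD$. Since the Szeg\H{o} class is closed under coefficient stripping, $J\vert_{m_l}\in\Sz(\large\E)$, whence $\sum_k g(x_{l,k})<\infty$; and $J^+\in\Sz(\large\E)$ by Proposition~\ref{Sz contains T}, so $\sum_j g(x_j^\pl)<\infty$. Thus $b_l$ and $b_\infty$ are genuine (inner) Blaschke products. By \eqref{Szego bound}, for $l$ large the numbers $a_{m_l+1}\cdots a_{m_l+n}/\ca(\large\E)^n$ are bounded above uniformly in $l$ and $n$, so the eigenvalue estimates of \cite{MR2855090} yield a bound $\sum_k g(x_{l,k})\le C$ \emph{uniform in $l$}; recalling $B(0,w)=e^{-g(\cm(w))}$ from \eqref{B and Green}, this says $b_l(0)=e^{-\sum_k g(x_{l,k})}\ge e^{-C}>0$. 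As $\{b_l\}$ is normal, it suffices to prove that every locally uniform subsequential limit $b$ equals $b_\infty$; fix such a subsequence (not relabelled) with $b_l\to b$, and note $b(0)\ge e^{-C}>0$, so $b\not\equiv0$ and $b$ is character automorphic.

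First I would determine the zeros of $b$. Strong convergence $J_{m_l}\xrightarrow[]{\,str.\,}J'$ entails strong resolvent convergence of the half-line restrictions $J\vert_{m_l}\to J^+$, and since the Jacobi parameters are uniformly bounded, a Vitali argument gives locally uniform convergence of the corresponding $m$-functions on $\C\setminus\large\E$ away from eigenvalues. The eigenvalues $\{x_{l,k}\}$, $\{x_j^\pl\}$ are the poles of these functions in $\R\setminus\large\E$, and the uniform bound above limits, for each $\de>0$, the number of $x_{l,k}$ with $g(x_{l,k})\ge\de$ to at most $C/\de$; a Hurwitz argument then shows that in the bulk of each gap the eigenvalues of $J\vert_{m_l}$ converge to those of $J^+$ with matching multiplicities. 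Lifting through $\cm$ and using that the zero set of a character automorphic function is $\Ga_\E$-invariant, $b$ vanishes on the orbit $\Ga_\E(p_j^\pl)$ for every $j$. Since $b_\infty$ is inner with zero set exactly $\bigcup_j\Ga_\E(p_j^\pl)$, the quotient $h:=b/b_\infty$ lies in the unit ball of $H^\infty$; in particular $|b(0)|\le|b_\infty(0)|$.

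It remains to rule out escape of Blaschke mass, i.e.\ to prove the reverse inequality $|b(0)|\ge|b_\infty(0)|$, which by \eqref{B and Green} and positivity of $B(0,\cdot)$ amounts to $\limsup_l\sum_k g(x_{l,k})\le\sum_j g(x_j^\pl)$. I would deduce this by combining the step-by-step sum rules and eigenvalue estimates of \cite{MR2855090} --- which control $\sum_k g(x_{l,k})$ together with the absolutely continuous entropy $\int_{\large\E}\log(\pi f_{m_l})\,d\mu_\E$ --- with the weak upper semicontinuity of that Szeg\H{o} integral along $d\mu_{m_l}\to d\mu'$ and the bulk eigenvalue convergence just established. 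Granted this, $|h(0)|\ge1$, so $h$ is a unimodular constant by the maximum principle; since $b_l(0),b_\infty(0)>0$ we get $h\equiv1$, i.e.\ $b=b_\infty$, and the proposition follows.

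The step I expect to be the main obstacle is this last one. Eigenvalues of $J\vert_{m_l}$ clustering near the endpoints $\al_j,\be_j$ could, a priori, proliferate as $l\to\infty$ and carry a non-vanishing amount of Green's-function mass into the limit, which would show up as a spurious singular inner factor of $b$ supported on the part of $\partial\DD$ lying over the gap endpoints. The quantitative control that excludes this --- a uniform-in-$l$ tail estimate on $\sum_k g(x_{l,k})$ over the union of the small gaps, matched against the convergence of the absolutely continuous parts --- is the technical heart of the proof; the normal-families skeleton and the Hurwitz/Vitali arguments in the bulk are comparatively routine.
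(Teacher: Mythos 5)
Your normal-families skeleton (uniform bound at $z=0$, identify the zeros of a subsequential limit, then rule out escape of Blaschke mass) is coherent, but the step you yourself flag as the ``technical heart'' --- proving $\limsup_l\sum_k g(x_{l,k})\le\sum_j g(x_j^\pl)$ so that no Green's-function mass is lost (or turned into a singular inner factor) in the limit --- is not actually proved, and the route you sketch for it does not close. The step-by-step sum rule ties $\sum_k g(x_{l,k})$ to $\log\bigl(a_1\cdots a_{m_l}/\ca(\large\E)^{m_l}\bigr)$ and to the entropy $S(d\mu_{m_l})$, but the semicontinuity of the entropy along $d\mu_{m_l}\to d\mu'$ only gives one-sided control, and \eqref{Szego bound} gives only crude upper/lower bounds on the $\tau$'s, not the sharp value along the subsequence; to extract the sharp eigenvalue inequality this way you would need precisely the convergence $S(d\mu_{m_l})\to S(d\mu')$ (or equivalently sharp control of the $\tau$'s), which in the paper is Proposition \ref{Szego parts} and is proved \emph{after} --- and using --- the present proposition evaluated at $z=0$, together with the nontrivial $J_{n|q}$ construction. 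So as written your argument is circular, or at best leaves its key inequality unestablished.

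The paper avoids this entirely by an elementary uniform tail estimate that your proposal is missing: since the stripped matrix $J\vert_{m_l}$ has at most one eigenvalue between any two consecutive eigenvalues of $J$ in each gap, the Blaschke condition \eqref{ev} for $J$ together with the Parreau--Widom condition \eqref{PW} gives, \emph{uniformly in $l$}, that the total Green's mass of the $x_{l,k}$ outside a fixed finite union of (slightly shrunk) gaps is less than any prescribed $\rho$; Lemma \ref{rho} then makes the corresponding tail Blaschke products uniformly close to $1$ on $\vert z\vert\le r$, and the finitely many remaining eigenvalues converge to those of $J^+$ (with matching counts) by the finite-gap results of \cite{MR2784484}. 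This interlacing-based tail bound is exactly the quantitative input that excludes mass accumulating at the gap edges, and it is obtained without any appeal to sum rules or entropy; if you want to salvage your normal-families version, you should prove the reverse inequality by this mechanism rather than by semicontinuity of the Szeg\H{o} integral.
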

The idea of the proof is to split the product on the left-hand side into an infinite product which is within $\eps$ of $1$ and a finite product that only corresponds to points in a fixed union of finitely many gaps (in $\large\E$) for all $l$. Then we can apply the same techniques as developed for finite gap sets in \cite[Sect.~3]{MR2784484} to establish the convergence.
\begin{lemma}
\label{rho}
If $\{y_k\}$ is a collection of points in $\R\setminus\large\E$ and $\sum_k g(y_k)<\rho$, then
\begin{equation}
\Bigl\vert \sideset{}{_k}\prod B(z,z_k)-1 \Bigr\vert \leq \exp\Bigl\{\mfrac{1+r}{1-r}\rho\Bigr\}-1
\; \mbox{ for $\vert z\vert \leq r$},
\end{equation}
where $\{z_k\}$ are the unique points in $\mathbb{F}$ for which $\cm(z_k)=y_k$.
\end{lemma}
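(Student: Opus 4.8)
The plan is to reindex $\prod_k B(z,z_k)$ as a single Blaschke product and estimate it factor by factor. For $a\in\DD\setminus\{0\}$ put $b_a(z)=\frac{|a|}{a}\cdot\frac{a-z}{1-\bar a z}$; then \eqref{Blaschke} reads $B(z,z_k)=\prod_{\ga\in\Ga_\E}b_{\ga(z_k)}(z)$, so that $\prod_k B(z,z_k)=\prod_{(k,\ga)}b_{\ga(z_k)}(z)$, a product over all pairs $(k,\ga)\in\N\times\Ga_\E$. The chosen normalization gives $b_a(0)=|a|>0$, hence $B(0,z_k)=\prod_\ga|\ga(z_k)|$ is a positive real number; by \eqref{B and Green} and the symmetry $|B(z,w)|=|B(w,z)|$ one has $B(0,z_k)=|B(z_k,0)|=e^{-g(\cm(z_k))}=e^{-g(y_k)}$, i.e. $-\log B(0,z_k)=g(y_k)$. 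Feeding this into the elementary inequality $1-t\le-\log t$ on $(0,1]$ yields $\sum_{(k,\ga)}\bigl(1-|\ga(z_k)|\bigr)\le\sum_k g(y_k)<\rho<\infty$; this is precisely the condition guaranteeing that $\{\ga(z_k)\}_{(k,\ga)}$ is a Blaschke sequence and that $\prod_{(k,\ga)}b_{\ga(z_k)}$ converges locally uniformly on $\DD$.

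Next comes the one-factor estimate. A direct computation gives the identity $1-b_a(z)=(1-|a|)\,\frac{a+|a|z}{a(1-\bar a z)}$; since $\bigl|a+|a|z\bigr|\le|a|(1+|z|)$ and $|1-\bar a z|\ge 1-|a||z|\ge 1-|z|$, this yields $|1-b_a(z)|\le(1-|a|)\frac{1+|z|}{1-|z|}\le(1-|a|)\frac{1+r}{1-r}$ for $|z|\le r$. Combining with the elementary inequality $\bigl|1-\prod_i c_i\bigr|\le\sum_i|1-c_i|$ — valid for any finite (hence, by the convergence just noted, also countable) family of $c_i$ with $|c_i|\le 1$, which applies since $|b_a(z)|\le1$ on $\DD$ — one obtains, for $|z|\le r$,
\[
\Bigl|1-\prod_k B(z,z_k)\Bigr|\le\sum_{(k,\ga)}|1-b_{\ga(z_k)}(z)|\le\frac{1+r}{1-r}\sum_{(k,\ga)}\bigl(1-|\ga(z_k)|\bigr)\le\frac{1+r}{1-r}\sum_k g(y_k)<\frac{1+r}{1-r}\,\rho .
\]
Since $x\le e^{x}-1$ for $x\ge0$, this is in fact slightly stronger than the claimed bound $\exp\{\frac{1+r}{1-r}\rho\}-1$.

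No step here is a genuine obstacle; the estimate is elementary once the setup is in place. The two points that call for a little care are: (i) passing from the three ingredients above (the value of $B(0,z_k)$, the one-factor bound, and $\bigl|1-\prod c_i\bigr|\le\sum|1-c_i|$) to a statement about the genuinely infinite product, which is legitimated by the local uniform convergence of $\prod_{(k,\ga)}b_{\ga(z_k)}$; and (ii) correctly carrying the unimodular prefactors $|\ga(w)|/\ga(w)$ in \eqref{Blaschke}, since it is precisely these that make $b_a(0)=|a|$ positive and so place the factor $1-|a|$ in front of $1-b_a(z)$, which is what makes the whole estimate go through.
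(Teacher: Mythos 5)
Your proof is correct and follows essentially the same route as the paper: both reduce $\sum_k g(y_k)$ to $\sum_{k,\gamma}\bigl(1-\vert\gamma(z_k)\vert\bigr)$ via \eqref{B and Green} and then invoke the standard per-factor Blaschke estimate $\vert 1-b_a(z)\vert\leq(1-\vert a\vert)\frac{1+r}{1-r}$; you simply write out in full what the paper dismisses as ``standard estimates for Blaschke products.'' Your use of $\bigl\vert 1-\prod_i c_i\bigr\vert\leq\sum_i\vert 1-c_i\vert$ (valid since $\vert c_i\vert\leq 1$) even yields the slightly sharper linear bound $\frac{1+r}{1-r}\rho$ in place of $\exp\bigl\{\frac{1+r}{1-r}\rho\bigr\}-1$, which of course implies the stated inequality.
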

\begin{proof}
By \eqref{B and Green} and \eqref{Blaschke}, we have
\[
\sum_k g(y_k)=-\log\Bigl(\prod_{k,\ga}\vert\ga(z_k)\vert\Bigr)
\geq\sum_{k,\ga}\Bigl(1-\vert\ga(z_k)\vert\Bigr).
\]
%Since
%\[
%\biggl\vert 1-\frac{\vert\ga(z_k)\vert}{\ga(z_k)}
%\frac{\ga(z_k)-z}{1-\overline{\ga(z_k)}z}\biggr\vert
%\leq\mfrac{1+r}{1-r}\Bigl(1-\vert\ga(z_k)\vert\Bigr)
%\; \mbox{ for $\vert z\vert\leq r$},
%\]
%it follows that
%\[
%\biggl\vert \prod_k B(z,z_k)-1\biggr\vert
%\leq\exp\biggl\{\mfrac{1+r}{1-r}\sum_{k,\ga}\Bigl(1-\vert\ga(z_k)\vert\Bigr)\biggr\}-1
%\]
%and the proof is clear.
Standard estimates for Blaschke products thus give the desired inequality.
\end{proof}

\begin{proof}[Proof of Proposition \ref{B conv}]
Given $r<1$, we show that the products in \eqref{Blaschke conv} converge uniformly in $\vert z\vert\leq r$.
To any $\rho>0$, there is a finite union of gaps, say
\[
I_\rho=(\al_{j_1},\be_{j_1})\cup\ldots\cup(\al_{j_N},\be_{j_N}),
\]
so that $\sum_{j\notin\{j_1,\ldots,j_N\}} g(c_j)<\rho$ and
\begin{equation}
\label{I rho}
\sum_{k:\, x_{l,k}\in[\al,\be]\setminus I_\rho} g(x_{l,k})<\rho
\; \mbox{ for all $l$}.
\end{equation}
For we know that $\sum_j g(c_j)<\infty$ and \eqref{I rho} is obtained by combining \eqref{ev} with the fact that the stripped matrix $J\vert_{m_l}$ has at most one eigenvalue between any two consecutive eigenvalues of $J$ in a gap in $\large\E$ (see, e.g., \cite[Sect. 3]{MR2855090}).
Actually, there is a $\de>0$ so that \eqref{I rho} holds with $I_\rho$ replaced by
\begin{equation}
\label{I delta}
I_{\rho,\de}=\medop\bigcup_{n=1}^N (\al_{j_n}+\de,\be_{j_n}-\de).
\end{equation}
Hence, we can directly apply the results of \cite[Sect. 3]{MR2784484}.
Given $\eps>0$, pick $\rho>0$ so small that $2\rho(1+r)/(1-r)<\log(1+\eps/4)$.
Moreover, take $L$ so large that
\begin{equation}
\sum_{k:\, x_{l,k}\notin[\al,\be]} g(x_{l,k})<\rho
\end{equation}
and $J\vert_{m_l}$ has the same number of eigenvalues as $J^+$ in each of the $N$ intervals in $I_{\rho,\de}$ for all $l>L$.
The above lemma then implies that
\begin{equation}
\label{B estimate}
\Bigl\vert \sideset{}{_k}\prod B(z,p_{l,k})-\sideset{}{_j}\prod B(z, p_j^\pl)\Bigr\vert
<\frac{\eps}{2}+\biggl\vert\prod_{k:\, x_{l,k}\in I_{\rho,\de}} B(z, p_{l,k})-
\prod_{j:\,x_j^\pl\in I_{\rho,\de}} B(z, p_j^\pl)\biggr\vert
\end{equation}
for $\vert z\vert\leq r$ and $l>L$.
By construction, the two products on the right-hand side have the same number of factors.
So if $x_{j}^\pl\in(\al_{j_n}+\de, \be_{j_n}-\de)$ for some $n$, there is exactly one $k$ so that $\al_{j_n}+\de<x_{l,k}<\be_{j_n}-\de$
and this $x_{l,k}$ converges to $x_{j}^\pl$ as $l\to\infty$ (according to \cite[Thm. 3.9]{MR2784484}).
Hence, for $l$ sufficiently large the products on the left-hand side of \eqref{B estimate} differ by at most $\eps$ when $\vert z\vert\leq r$. This completes the proof.
\end{proof}

Now, let again $d\mu'=f^\pl(t)dt+d\mu'_\s$ denote the spectral measure of $J^+$. Following \cite{MR2855090}, we write $S(d\mu)$ for the entropy of $d\mu_\E$ relative to $d\mu$. When $d\mu=f(t)dt+d\mu_\s$ and $f(t)>0$ for a.e. $t\in\large\E$, we simply have
\[
S(d\mu)=\int_\E\log\biggl(\frac{f_\E(t)}{f(t)}\biggr)d\mu_\E(t),
\]
where $d\mu_\E=f_\E(t)dt$.
\begin{proposition}
\label{Szego parts}
Under the hypotheses of Theorem \ref{Jost asympt} and with notation as above, we have
\begin{equation}
\label{f conv}
\log f_{m_l} \hspace{0.02cm} d\mu_\E \xrightarrow[]{\;w\;} \log f^\pl \hspace{0.02cm} d\mu_\E
\; \mbox{ as $l\to\infty$}.
\end{equation}
\end{proposition}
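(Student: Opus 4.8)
The plan is to derive \eqref{f conv} from the convergence of the $m$-functions combined with a sum-rule/entropy semicontinuity argument. First I would record the elementary facts: since $J_{m_l}\to J'$ strongly, the spectral measures converge weakly, $d\mu_{m_l}\to d\mu'$, and hence the $m$-functions converge, $m_{m_l}(x)\to m^\pl(x)$, locally uniformly on $\C\setminus\cvh(\large\E)$. Because all the matrices $J\vert_{m_l}$ lie in $\Sz(\large\E)$ (Section \ref{sec2}, closedness under stripping) and $J^+\in\Sz(\large\E)$ by Proposition \ref{Sz contains T}, the quantities $\log f_{m_l}$ and $\log f^\pl$ are genuinely in $L^1(\large\E,d\mu_\E)$, so the statement makes sense. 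The target \eqref{f conv} is weak-$*$ convergence of the finite (signed) measures $\log f_{m_l}\,d\mu_\E$ against $C(\large\E)$; equivalently, after splitting $\log f=\log_+f-\log_-f$, one controls the positive and negative parts separately.

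The heart of the matter is an upper bound and a lower bound. For the \emph{upper} direction, I would use upper semicontinuity of the relative entropy $S(\cdot)$ under weak convergence — this is exactly the tool highlighted by the notation $S(d\mu)=\int_\E\log(f_\E/f)\,d\mu_\E$ borrowed from \cite{MR2855090}: since $d\mu_{m_l}\to d\mu'$ weakly, $\limsup_l S(d\mu_{m_l})\le S(d\mu')$, equivalently $\liminf_l\int_\E\log f_{m_l}\,d\mu_\E\ge\int_\E\log f^\pl\,d\mu_\E$. For the \emph{lower} direction one needs a matching $\limsup$ bound, and this is where the step-by-step sum rules of \cite{MR2855090} enter: the Szegő integral $\int_\E\log f_{m_l}\,d\mu_\E$ differs from $\int_\E\log f\,d\mu_\E$ by a telescoping sum of step-by-step terms (logs of $a_k/\ca(\large\E)$ type quantities plus eigenvalue contributions $\sum g(x_{l,k})$), all of which are controlled because $J\in\Sz(\large\E)$; taking $l\to\infty$ and using $a_k/\ca(\large\E)$ having finite limsup/liminf from \eqref{Szego bound}, one gets $\limsup_l\int_\E\log f_{m_l}\,d\mu_\E\le\int_\E\log f^\pl\,d\mu_\E$. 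Combining the two gives convergence of the total masses $\int_\E\log f_{m_l}\,d\mu_\E\to\int_\E\log f^\pl\,d\mu_\E$.

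To upgrade convergence of masses to weak-$*$ convergence of the measures, I would proceed as follows. Fatou/weak-lower-semicontinuity arguments applied not on all of $\large\E$ but on an arbitrary relatively open $U\subset\large\E$ give $\liminf_l\int_U\log f_{m_l}\,d\mu_\E\ge\int_U\log f^\pl\,d\mu_\E$ (once one controls the negative part uniformly, see below), and applying the same to the complement $\large\E\setminus U$ together with convergence of total masses forces equality of the limits on $U$; a routine approximation then yields $\int\varphi\log f_{m_l}\,d\mu_\E\to\int\varphi\log f^\pl\,d\mu_\E$ for all $\varphi\in C(\large\E)$. The \textbf{main obstacle} is uniform integrability of the negative parts $(\log f_{m_l})_-$ with respect to $d\mu_\E$ — i.e. ruling out that mass of $\log f_{m_l}$ escapes to $-\infty$ on small sets. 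This is precisely what the quantitative step-by-step eigenvalue estimates and sum rules from \cite{MR2855090} are designed to provide: the negative contributions are dominated, uniformly in $l$, by a convergent series of Green's-function values $\sum_k g(x_{l,k})$ (bounded via \eqref{ev} and \eqref{I rho}) plus a bounded bulk term. Once uniform integrability is in hand, the weak-$*$ convergence \eqref{f conv}, and hence convergence of the Szegő parts of the Jost functions, follows by the standard Vitali-type argument sketched above.
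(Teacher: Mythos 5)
Your overall strategy matches the paper's up to a point: the paper also reduces \eqref{f conv} to showing $S(d\mu_{m_l})\to S(d\mu')$ together with $d\mu_{m_l}\xrightarrow{w}d\mu'$ (this reduction is exactly the Simon--Zlato\v{s} lemma, which the paper simply cites rather than re-proving via your uniform-integrability/Vitali sketch), and the easy half $\limsup S(d\mu_{m_l})\le S(d\mu')$ comes from upper semicontinuity of relative entropy in both arguments. The problem is your other half.

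Your ``lower direction'' has a genuine gap. Telescoping the step-by-step sum rules from $J$ down to $J\vert_{m_l}$ gives an identity of the form
\begin{equation*}
\int_\E\log f_{m_l}\,d\mu_\E=\int_\E\log f\,d\mu_\E+2\log\tau_{m_l}-2\sideset{}{_k}\sum g(x_k)+2\sideset{}{_k}\sum g(x_{l,k}),
\qquad \tau_{m_l}=\frac{a_1\cdots a_{m_l}}{\ca(\large\E)^{m_l}}.
\end{equation*}
The eigenvalue sums do converge (to $\sum_j g(x_j^\pl)$, by Proposition \ref{B conv} at $z=0$), and $\tau_{m_l}$ is bounded above and below by \eqref{Szego bound}; but \eqref{Szego bound} tells you nothing about the \emph{value} of $\limsup\log\tau_{m_l}$. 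Your identity therefore compares $\int\log f_{m_l}\,d\mu_\E$ to $\int\log f\,d\mu_\E$ plus an unknown constant, and the inequality $\limsup_l\int_\E\log f_{m_l}\,d\mu_\E\le\int_\E\log f^\pl\,d\mu_\E$ does not follow: identifying $\lim\log\tau_{m_l}$ with $\tfrac12(\int\log f^\pl-\int\log f)\,d\mu_\E+\sum_kg(x_k)-\sum_jg(x_j^\pl)$ is essentially the Szeg\H{o} asymptotics one is ultimately trying to prove, so the argument is circular at this point. The paper circumvents this by introducing the interpolating matrix $J_{n|q}$, which glues the block of $J$ between rows $m_{l(n)}$ and $m_{l(q)}$ onto the \emph{limit} $J^+$, and applies the iterated sum rule to $J_{n|q}$ stripped down to $J^+$. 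In that sum rule the unknown $\tau$'s appear only as the ratio $\tau_{l(q)}/\tau_{l(n)}$, which tends to $1$ in the double limit $l(q)\to\infty$ then $l(n)\to\infty$ (after passing to a subsequence where $\tau_{l(n)}\to\tau>0$), while $S(d\mu')$ appears explicitly on the right-hand side; upper semicontinuity applied to $d\mu_{n|q}\xrightarrow{w}d\mu_{m_{l(n)}}$ then delivers $\liminf S(d\mu_{m_l})\ge S(d\mu')$. This step also needs a separate rank-two perturbation argument to control $\sum_kg(x_{n|q,k})$ uniformly, which has no counterpart in your sketch. Without the interpolation (or some equivalent device that brings $S(d\mu')$ itself into the sum rule), the lower bound is not established.
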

\begin{proof}
Since $J\vert_{m_l}\xrightarrow[]{\, str.\,} J^+$, it follows that $d\mu_{m_l}\xrightarrow[]{\,w\,}d\mu'$. Hence, by a lemma of Simon and Zlato\v{s} (see \cite{MR2020274} or \cite[Sect. 6]{MR2784484}), it suffices to show that $S(d\mu_{m_l})\to S(d\mu')$. As relative entropy is weakly upper semicontinuous, we automati\-cal\-ly get
\[
\limsup S(d\mu_{m_l})\leq S(d\mu').
\]
To prove that $\liminf S(d\mu_{m_l})\geq S(d\mu')$, start by taking a subsequence $m_{l(n)}\ra\infty$ so that
\begin{equation}
\label{liminf}
S\bigl(d\mu_{m_{l(n)}}\bigr)\to\liminf S(d\mu_{m_l}).
\end{equation}
Since the sequence ${a_1\cdots a_n}/{\ca(\large\E)^n}$ is bounded above and below by some positive constants, we can assume that
\begin{equation}
\tau_{l(n)}:=\frac{a_1\cdots a_{m_{l(n)}}}{\ca(\large\E)^{m_{l(n)}}}\longrightarrow\tau>0
\; \mbox{ as $l(n)\to\infty$}.
\end{equation}
Now, for $n<q$ consider the matrix $J_{n|q}$ given by
\begin{equation*}
\label{Jq}
J_{n|q}=\left( \begin{matrix}
\vspace{0.05cm}
b_{m_{l(n)}+1} & a_{m_{l(n)}+1} &&&&& \\
a_{m_{l(n)}+1} & b_{m_{l(n)}+2} & a_{m_{l(n)}+2} &&& \\
\vspace{-0.1cm}
& \hspace{-0.3cm} \protect\rotatebox[origin=c]{10}{$\ddots$} & \hspace{-0.8cm}\protect\rotatebox[origin=c]{10}{$\ddots$} & \hspace{-0.7cm}\protect\rotatebox[origin=c]{10}{$\ddots$} && \\
\vspace{0.05cm}
&& \hspace{-0.8cm}\protect\rotatebox[origin=c]{10}{$\ddots$} & \hspace{-0.3cm}b_{m_{l(q)}} & a_{m_{l(q)}} & \\
&&& \hspace{-0.3cm} a_{m_{l(q)}} & \hspace{-0.2cm} {\bf \phantom{.}_\ulcorner}
& \hspace{-0.5cm} \raisebox{3pt}{\line(1,0){12}}  \\
&&&& \hspace{-0.2cm} \line(0,1){10} & \hspace{-0.4cm} \raisebox{2pt}{$J^+$}  \\
\end{matrix} \right).
\end{equation*}
Applying the iterated step-by-step sum rule from \cite{MR2855090}, we get
\begin{equation*}
\label{n and q}
\log\Bigl({\tau_{l(q)}}/{\tau_{l(n)}}\Bigr)=\sum_k g\bigl(x_{n|q,k}\bigr)-\sum_j g\bigl(x_j^\pl\bigr)
+\mfrac{1}{2}\Bigl(S(d\mu_{n|q})-S(d\mu')\Bigr),
\end{equation*}
where $d\mu_{n|q}$ is the spectral measure of $J_{n|q}$ and $\{x_{n|q,k}\}$ denote the associated eigenvalues in $\R\setminus\large\E$.
The idea is now first to let $l(q)\to\infty$ and only afterwards let $l(n)\to\infty$. Clearly, $d\mu_{n|q}\xrightarrow[]{\,w\,}d\mu_{m_{l(n)}}$ and $\tau_{l(q)}\to\tau$ as $l(q)\to\infty$.
It requires an extra argument to show that
\begin{equation}
\label{g conv}
\sideset{}{_k}\sum g\bigl(x_{n|q,k}\bigr) \longrightarrow \sideset{}{_k}\sum g\bigl(x_{l(n),k}\bigr).
\end{equation}
Observe that $J_{n|q}$ is a rank two perturbation of the direct sum of the upper left corner of $J\vert_{m_{l(n)}}$ and $J^+$. By splitting the matrix
\[
\left(\begin{matrix} 0 & a_{m_{l(q)}} \\ a_{m_{l(q)}} & 0 \end{matrix}\right)
\]
into the sum of a positive and a negative rank one perturbation, a little bookkeeping shows that
\begin{equation}
\sum_{k:\, x_{n|q,k}\in(\al_j,\be_j)} g\bigl(x_{n|q,k}\bigr)\leq
N \biggl( g(c_j)+\sum_{k:\, x_k\in(\al_j,\be_j)}g(x_k) \biggr)
\end{equation}
for some integer $N$ which is independent of $n, q$ and $j$ (cf. the proof of \cite[Prop. 3]{MR2855090}). Within any given precision, we can therefore restrict to a finite union of intervals as in \eqref{I delta} by also taking $l(n)$ sufficiently large. The desired converge of the finitely many remaining eigenvalues follows from \cite[Thm. 3.10]{MR2784484}.

If we finally let $l(n)\to\infty$, Proposition \ref{B conv} (with $z=0$) implies that
\[
\sideset{}{_k}\sum g\bigl( x_{l(n),k}\bigr) \longrightarrow \sideset{}{_j}\sum g\bigl(x_j^\pl\bigr).
\]
Since $\tau_{l(n)}\to\tau$ and because of \eqref{liminf}, the proof is clear.
\end{proof}

\begin{proof}[Proof of Theorem \ref{Jost asympt}]
It follows directly from Proposition \ref{B conv} that the Blasch\-ke parts converge locally uniformly on $\DD$. To show that the Szeg\H{o} parts converge too, it suffices to prove that
\[
\log f_{m_l}\bigl(\cm(e^{i\theta})\bigr)\mfrac{d\theta}{2\pi}
\,\xrightarrow[]{\, w \,}\,
\log f^\pl\bigl(\cm(e^{i\theta})\bigr)\mfrac{d\theta}{2\pi}
\; \mbox{ as $l\to\infty$}.
\]
As in the proof of \cite[Prop. 6.3]{MR2784484}, this is a consequence of Proposition \ref{Szego parts}.
\end{proof}

\section{Jost solutions and polynomial asymptotics}
\label{sec5}
Given a matrix $J=\{a_n,b_n\}_{n=1}^\infty$ for which $\si_\ess(J)=\large\E$, we introduce the so-called \emph{Weyl solution} by
\begin{equation}
W_n(z)=-\bigl< \de_n, (J-\cm(z))^{-1}\de_1\bigr>,
\quad \cm(z)\notin\si(J).
\end{equation}
Clearly, $W_1$ is nothing but the $M$-function for $J$ (i.e., minus the $m$-function lifted by the covering map to a meromorphic function on $\DD$). More generally,
\begin{equation}
W_n(z)=M(z)\cdot a_1M_1(z)\cdots a_{n-1}M_{n-1}(z)
\end{equation}
and since the poles of $M_k$ coincide with the zeros of $M_{k-1}$, the only poles of $W_n$ are those of $M$.
The term `solution' comes from the fact that $W_n$ obeys the three-term recurrence relation
\begin{equation}
\label{weyl rec}
\cm\, W_{n}=a_n W_{n+1}+b_nW_n+a_{n-1}W_{n-1}
\end{equation}
for all $n\geq 2$.
\begin{definition}
For a measure $d\mu$ in the Szeg\H{o} class for $\large\E$, we define the \emph{Jost solution} by
\begin{equation}
\label{Jost sol}
u_n(z;d\mu)=u(z;d\mu)\,W_n(z), \quad n\geq 1.
\end{equation}
\end{definition}
Since the Blaschke part of $u$ has zeros at the poles of $M$, $u_n$ is analytic on $\DD$.
By definition of $u$ and due to the inner-outer factorization of $M$ (see, e.g., \cite[Sect. 3]{MR2855090}), we also have
\begin{equation}
\label{un}
u_n(z;d\mu)=a_n^{-1}B(z)^n u(z;d\mu_n),
\end{equation}
where $d\mu_n$ is the spectral measure of the stripped matrix $J\vert_n$. Hence,
\begin{equation}
\label{uW}
u(z;d\mu_n)=\frac{a_nW_n(z)}{B(z)^{n}}u(z;d\mu)
\end{equation}
and it readily follows that
\begin{equation}
\label{linearize}
\chi(J\vert_n)=\chi_{\substack{\\ 0}}^{-n}\chi(J),
\end{equation}
where $\chi_{\substack{\\ 0}}$ is the character of $B$. We can interpret \eqref{linearize} by saying that the map $J\mapsto\chi(J)$ `linearizes' coefficient stripping. In particular, the left shift on $\T$ corresponds to multiplying by the inverse of the character of $B$ under the Abel map.

Now, let $\F\subset\DD$ be a fundamental set for $\Ga_\E$ which contains the point $z=0$ and denote by $\F^\nt$ its interior. Recall that the covering map $\cm$ is a meromorphic bijection of $\F^\nt$ onto $\overline\C\setminus[\al,\be]$.
When $J'$ belongs to $\T$, we denote by $d\mu'$ the spectral measure of $J^+$ (i.e., the restriction to $\ell^2(\N)$ of $J'$).
Since $J^+$ has no eigenvalues outside $[\al,\be]$, the Jost function $u(z;d\mu')$ is nonvanishing on $\F^\nt$. Moreover, as will be useful shortly, the sequence $u(z;d\mu'_n)$ is uniformly bounded away from zero (and infinity) on compact subsets of $\F^\nt$. Otherwise we could pass to a subsequence of matrices whose limiting Jost function would have a zero (or a pole) in $\F^\nt$, violating Theorem \ref{Jost asympt}.

\begin{proposition}
\label{u limit}
Let $d\mu$ be a measure in $\Sz(\large\E)$ and $J$ the associated Jacobi matrix.
If \eqref{limit} holds for some $J'$ in $\T$, then
\begin{equation}
\label{Jost limit}
{u_n(z;d\mu)}/{u_n(z;d\mu')} \ra 1 \, \mbox{ as $n\to\infty$}
\end{equation}
uniformly on compact subsets of $\F^\nt$.
\end{proposition}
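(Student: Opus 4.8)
The plan is to reduce, via the factorization \eqref{un}, from Jost \emph{solutions} to Jost \emph{functions} of stripped measures, and then to run a compactness argument in $\T$ in the spirit of the proof of Theorem \ref{JSC}. Write $d\mu_n$ for the spectral measure of $J\vert_n$ and $d\mu_n'$ for that of $J_n^{+}=(J^{+})\vert_n$. Applying \eqref{un} to both $d\mu$ and $d\mu'$ gives
\begin{equation*}
\frac{u_n(z;d\mu)}{u_n(z;d\mu')}=\frac{a_n'}{a_n}\cdot\frac{u(z;d\mu_n)}{u(z;d\mu_n')}.
\end{equation*}
I would fix a compact set $K\subset\F^{\nt}$ and suppose, for a contradiction, that this quotient does not tend to $1$ uniformly on $K$; then there are $\eps>0$ and a subsequence $n_l\to\infty$ with $\sup_{z\in K}\bigl\vert u_{n_l}(z;d\mu)/u_{n_l}(z;d\mu')-1\bigr\vert\geq\eps$ for every $l$.

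Since the parameters of $J$ are bounded, the set of right limits of $J$ is compact, so after passing to a further subsequence (not relabelled) we may assume $J_{n_l}\xrightarrow[]{\,str.\,}\tilde J$, and $\tilde J\in\T$ by the Denisov--Rakhmanov--Remling theorem. Because $\vert a_n-a_n'\vert+\vert b_n-b_n'\vert\to 0$, the two-sided matrices $J_{n_l}$ and $J_{n_l}'$ differ by a strongly null sequence, so $J_{n_l}'\xrightarrow[]{\,str.\,}\tilde J$ as well; and since $J^{+}$ is the one-sided matrix $\{a_n',b_n'\}_{n=1}^{\infty}$, the entries of $(J^{+})_{n_l}$ and $J_{n_l}'$ in any fixed window coincide once $n_l$ is large, so $(J^{+})_{n_l}\xrightarrow[]{\,str.\,}\tilde J$ too. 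In particular the $(0,1)$-entries converge, whence $a_{n_l}\to\tilde a_0$ and $a_{n_l}'\to\tilde a_0$, where $\tilde a_0\neq 0$ because the reflectionless identity \eqref{refl m} (for $n=0$) cannot hold with $\tilde a_0=0$; hence $a_{n_l}'/a_{n_l}\to 1$.

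Then I would apply the Jost asymptotics of Theorem \ref{Jost asympt} on each side. For $d\mu\in\Sz(\large\E)$ and the subsequence $J_{n_l}\to\tilde J$ it gives $u(z;d\mu_{n_l})\to u(z;\tilde J)$ uniformly on compact subsets of $\DD$. By Proposition \ref{Sz contains T} the spectral measure $d\mu'$ of $J^{+}$ lies in $\Sz(\large\E)$, so applying Theorem \ref{Jost asympt} to $d\mu'$ and the subsequence $(J^{+})_{n_l}\to\tilde J$ gives $u(z;d\mu_{n_l}')\to u(z;\tilde J)$ uniformly on compact subsets of $\DD$ --- here $d\mu_{n_l}'$ is precisely the stripped spectral measure of $(J^{+})\vert_{n_l}$ to which the theorem applies. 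Since the denominators $u(\,\cdot\,;d\mu_n')$ are uniformly bounded away from $0$ on $K$ (as recalled just above), the quotient $u(z;d\mu_{n_l})/u(z;d\mu_{n_l}')$ tends to $u(z;\tilde J)/u(z;\tilde J)=1$ uniformly on $K$; together with $a_{n_l}'/a_{n_l}\to 1$ this forces $u_{n_l}(z;d\mu)/u_{n_l}(z;d\mu')\to 1$ uniformly on $K$, contradicting the choice of $n_l$. As $K\subset\F^{\nt}$ was arbitrary, \eqref{Jost limit} would follow.

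The hard part is bookkeeping rather than analysis: one must ensure that Theorem \ref{Jost asympt} is invoked with the \emph{same} limit $\tilde J$ on the $J$-side and on the $J'$-side --- this is exactly where the hypothesis \eqref{limit} enters, as it forces $J_{n_l}$, $J_{n_l}'$ and $(J^{+})_{n_l}$ to share a strong limit --- and that stripping-then-shifting agrees with shifting-then-stripping, so that the measures $d\mu_{n_l}'$ above really are the stripped spectral measures handled by Theorem \ref{Jost asympt}. Everything else, in particular the non-vanishing of the limiting Jost function and hence of the denominators on compact subsets of $\F^{\nt}$, has already been arranged in the discussion preceding the statement.
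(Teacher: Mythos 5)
Your proposal is correct and follows essentially the same route as the paper: reduce via \eqref{un} to the ratio $u(z;d\mu_n)/u(z;d\mu_n')$, observe that \eqref{limit} forces $J$ and $J^+$ to share right limits so that Theorem \ref{Jost asympt} applies on both sides with the same limit $\tilde J$, use the uniform lower bound on $u(\,\cdot\,;d\mu_n')$ over compact subsets of $\F^\nt$, and finish with $a_n/a_n'\to 1$. The only difference is presentational --- you make explicit the subsequence-extraction/compactness bookkeeping and the verification that $d\mu'\in\Sz(\E)$, which the paper leaves implicit.
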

\begin{proof}
Because of \eqref{limit}, $J$ and $J^+$ have the same right limits. Hence, Theorem \ref{Jost asympt} implies that
\begin{equation}
\label{Jost diff}
\bigl\vert u(z;d\mu_n)-u(z;d\mu_n')\bigr\vert \to 0
\end{equation}
uniformly on compact subsets of $\DD$. Since $u(z;d\mu_n')$ is uniformly bounded away from zero on any compact subset of $\F^\nt$, we have
\[
\biggl\vert \frac{a_n}{a_n'}\frac{u_n(z;d\mu)}{u_n(z;d\mu')}-1\biggr\vert
=\biggl\vert\frac{u(z;d\mu_n)}{u(z;d\mu_n')}-1\biggr\vert \longrightarrow 0
\]
locally uniformly on $\F^\nt$. The result follows by noting that $a_n/a_n'\to 1$.
\end{proof}

The key result to be proven is this section is
\begin{theorem}
\label{Sz asy}
Let $d\mu$ be a measure in $\Sz(\large\E)$ and $J$ the associated Jacobi matrix.
If $J'$ is the unique element in $\T$ for which \eqref{limit} holds, then
\begin{equation}
\label{P limit}
\frac{P_n(\cm(z),d\mu)}{P_n(\cm(z),d\mu')}
\longrightarrow\frac{u(z;d\mu)}{u(z;d\mu')} \; \mbox{ as $n\to\infty$}
\end{equation}
uniformly on compact subsets of $\F^\nt$.
\end{theorem}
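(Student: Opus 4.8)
The plan is to turn the statement into a ratio asymptotics for diagonal resolvent matrix elements, for which Proposition~\ref{u limit} can be combined with an off-diagonal decay estimate on the resolvents of $J$ and $J^+$.

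\emph{Reduction to Green's functions.} For a half-line Jacobi matrix $J$ with spectral measure $d\mu\in\Sz(\large\E)$, set $G_{nn}^J(z):=\bigl<\de_n,(J-\cm(z))^{-1}\de_n\bigr>$. Both $k\mapsto P_{k-1}(\cm(z),d\mu)$ and $k\mapsto W_k(z)$ solve the three-term recurrence \eqref{weyl rec}, so the Wronskian $a_k\bigl(P_{k-1}(\cm(z),d\mu)\,W_{k+1}(z)-P_k(\cm(z),d\mu)\,W_k(z)\bigr)$ is $k$-independent, and a short computation with \eqref{m} identifies it as $-1$; the standard resolvent-as-product-of-solutions formula then gives $G_{nn}^J(z)=-P_{n-1}(\cm(z),d\mu)\,W_n(z)$. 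Together with $u_n(z;d\mu)=u(z;d\mu)\,W_n(z)$ from \eqref{Jost sol}, this yields, for every $n\geq1$,
\[
P_n(\cm(z),d\mu)=-\,\frac{u(z;d\mu)}{u_{n+1}(z;d\mu)}\;G_{n+1,n+1}^J(z),
\]
and likewise with $d\mu'$ and $J^+$. Dividing the two identities and invoking Proposition~\ref{u limit} (so that $u_{n+1}(z;d\mu')/u_{n+1}(z;d\mu)\to1$ locally uniformly on $\F^\nt$) reduces the theorem to showing $G_{n+1,n+1}^J(z)/G_{n+1,n+1}^{J^+}(z)\to1$ locally uniformly on $\F^\nt$.

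\emph{The Green's function ratio.} Fix a compact $K\subset\F^\nt$ avoiding $z=0$ and the (locally finitely many) $\cm$-preimages of those eigenvalues of $J$ lying outside $[\al,\be]$. Since $\cm(\F^\nt)=\overline\C\setminus[\al,\be]$ and $\si(J^+)\subset[\al,\be]$ (because $J^+$ is a compression of $J'\in\T$ and $\si(J')=\large\E$ has convex hull $[\al,\be]$), the set $\cm(K)$ lies at a positive distance from $\si(J)\cup\si(J^+)$. By the Combes--Thomas estimate, $(J-\cm(z))^{-1}$ and $(J^+-\cm(z))^{-1}$ have off-diagonal entries bounded by $Ce^{-\ga_0|j-k|}$, with $C,\ga_0>0$ uniform for $z\in K$. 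Inserting this into the resolvent identity $(J-x)^{-1}-(J^+-x)^{-1}=(J-x)^{-1}(J^+-J)(J^+-x)^{-1}$, and noting that $J^+-J$ is tridiagonal with entries $\leq|a_j-a_j'|+|b_j-b_j'|\to0$ by \eqref{limit}, one gets
\[
\bigl|G_{n+1,n+1}^J(z)-G_{n+1,n+1}^{J^+}(z)\bigr|\;\leq\; C'\sideset{}{_j}\sum e^{-2\ga_0|n+1-j|}\bigl(|a_j-a_j'|+|b_j-b_j'|\bigr)\;\longrightarrow\;0
\]
uniformly on $K$, the weight being concentrated near $j=n+1$. On the other hand $|G_{n+1,n+1}^{J^+}(z)|$ is bounded below on $K$ uniformly in $n$: by the Denisov--Rakhmanov--Remling theorem (Theorem~\ref{DRR}) every right limit of $J^+$ lies in $\T$, whose diagonal Green's functions are of the form \eqref{G} and hence non-vanishing on $\overline\C\setminus[\al,\be]$, and $\T$ is compact; via Theorem~\ref{Jost asympt} this passes to the $G_{n+1,n+1}^{J^+}$. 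Hence the ratio tends to $1$ uniformly on $K$. Near $z=0$ one argues separately: both $G_{nn}^J$ and $G_{nn}^{J^+}$ have a simple zero there and agree to leading order in $1/\cm(z)$, so their ratio extends across $z=0$ with value $1$, uniformly in $n$.

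\emph{Removing the exceptional points, and non-vanishing of the limit.} The above gives $P_n(\cm(z),d\mu)/P_n(\cm(z),d\mu')\to u(z;d\mu)/u(z;d\mu')$ locally uniformly on $\F^\nt$ minus a discrete set. But $z\mapsto P_n(\cm(z),d\mu')$ is zero-free on $\F^\nt$ (all zeros of $P_n(\cdot,d\mu')$ lie in $[\al,\be]$, the convex hull of $\supp d\mu'=\si(J^+)$, while $\cm(\F^\nt)=\overline\C\setminus[\al,\be]$), so the ratio is holomorphic on all of $\F^\nt$, and it is locally bounded there uniformly in $n$: away from $z=0$ one has, from the representation above together with \eqref{un}, the uniform two-sided bound on $u(z;d\mu'_n)$ recorded before Proposition~\ref{u limit}, and the Green's-function bounds of the previous step, that $|B(z)^nP_n(\cm(z),d\mu')|$ stays bounded above and below; similarly $|B(z)^nP_n(\cm(z),d\mu)|$ stays bounded, using that $u(z;d\mu)\,G_{n+1,n+1}^J(z)$ remains bounded even across a $\cm$-preimage of an eigenvalue of $J$ (there a simple pole of $G_{n+1,n+1}^J$, with residue at most $1$, meets a simple zero of the Blaschke factor of $u$); near $z=0$ one uses \eqref{Szego bound} and the maximum principle. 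Vitali's theorem then upgrades the convergence to local uniform convergence on all of $\F^\nt$; the limit equals $u(z;d\mu)/u(z;d\mu')$, which is non-zero precisely off the $\cm$-preimages of the eigenvalues of $J$ since $u(\cdot;d\mu')$ is zero-free on $\F^\nt$ and $u(\cdot;d\mu)$ vanishes exactly there.

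\emph{The main obstacle.} The genuine difficulty is the interference of the eigenvalues of $J$ lying outside $[\al,\be]$: they are poles of $G_{nn}^J$ and zeros of the limiting ratio, so the Combes--Thomas argument must be carried out away from them and the conclusion then re-closed by the normal-families argument. Making the Combes--Thomas rate and, above all, the lower bound on $G_{n+1,n+1}^{J^+}$ uniform in $n$ (which leans on the Denisov--Rakhmanov--Remling theorem, Theorem~\ref{Jost asympt}, and compactness of $\T$) is the other delicate point. An alternative, equally valid route would bypass Green's functions and compare $P_n(\cdot,d\mu)$ directly with the explicit Jost-function product formula for $P_n(\cdot,d\mu')$ available when $J'\in\T$; the reduction above is simply the shortest given Proposition~\ref{u limit}.
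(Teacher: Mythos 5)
Your proof follows essentially the same route as the paper: the Wronskian identity expressing $P_n$ through the Jost solution and the diagonal Green's function, Proposition \ref{u limit} for the ratio $u_n(z;d\mu)/u_n(z;d\mu')$, the second resolvent identity with Combes--Thomas bounds to get $G_{nn}(z;J)/G_{nn}(z;J^+)\to 1$ (the paper's Proposition \ref{Green conv}), and analyticity plus normal families to absorb the excised points $\{p_k\}\cup\{0\}$. The only real deviation is your uniform lower bound on $G_{nn}(z;J^+)$, obtained via right limits and compactness of $\T$ instead of the paper's route through Proposition \ref{P'}; both are valid (and indeed an even more elementary bound is available from the spectral representation, since $\si(J^+)\subset[\al,\be]$ and $\cm(\F^\nt)=\overline\C\setminus[\al,\be]$).
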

The above theorem immediately implies Theorem \ref{Sz asympt} from the introduction and it enables us to describe the asymptotic behavior of $P_n(x,d\mu)$ if we can control the polynomials associated with points in $\T$.

Set $a_0=1$, $u_0=u$, and $P_{-1}=0$. Then the Jost solution $u_n$ and the lifted polynomials $P_{n-1}\circ\cm$ both satisfy the recurrence relation \eqref{weyl rec} for $n\geq 1$. Since $u_n$ is $\ell^2$ at $+\infty$ and the Wronskian of $u_n$ and $P_{n-1}$ is given by
\[
\Wr(z)=a_n\bigl(u_{n+1}P_{n-1}-u_nP_n\bigr)\raisebox{-0.01cm}{$\big\vert_{n=0}$}=-u,
\]
we have
\begin{equation}
\label{Green}
G_{nm}(z;J)=-\bigl< \de_n, (J-\cm(z))^{-1}\de_m \bigr>
=\frac{u_n(z;d\mu) P_{m-1}(\cm(z),d\mu)}{u(z;d\mu)}
\end{equation}
for all $z\in\F^\nt\setminus\{p_k\}$ (and $z\neq 0$ if $m>n+1$). Here, as in Section \ref{sec3}, the $p$'s account for the zeros of $u$ in $\F$. Combining \eqref{Green} for $J$ and $J^+$ (where $a_0'u_0=u$), it follows that
\begin{equation}
\frac{P_{n-1}(\cm(z),d\mu)}{P_{n-1}(\cm(z),d\mu')}
=\frac{G_{nn}(z;J)}{G_{nn}(z;J^+)}\frac{u_n(z;d\mu')}{u_n(z;d\mu)}\frac{u(z;d\mu)}{u(z;d\mu')}
\end{equation}
for all $z\in\F^\nt$. Hence, Theorem \ref{Sz asy} is an easy consequence of Proposition \ref{u limit} and
\begin{proposition}
\label{Green conv}
With $J$ and $J'$ as in Theorem \ref{Sz asy}, we have
\begin{equation}
\label{Gnn}
{G_{nn}(z;J)}/{G_{nn}(z;J^+)}\to 1 \, \mbox{ as $n\to\infty$}
\end{equation}
uniformly on compact subsets of $\F^\nt\setminus(\{p_k\}\cup\{0\})$.
\end{proposition}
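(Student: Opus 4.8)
The representation \eqref{Green} is unavailable here, since using it would make the argument circular via Theorem \ref{Sz asy}; instead the plan is to compare $G_{nn}(z;J)$ and $G_{nn}(z;J^+)$ directly through the second resolvent identity. The two inputs are the hypothesis $|a_n-a_n'|+|b_n-b_n'|\to0$ and the Combes--Thomas estimate for bounded Jacobi matrices: if $A=\{a_k,b_k\}$ is bounded and $\dist(x,\si(A))\geq c>0$, then $|\langle\de_i,(A-x)^{-1}\de_j\rangle|\leq Ce^{-\ga|i-j|}$ with $C,\ga>0$ depending only on $c$ and $\|A\|$. This estimate localizes $G_{nn}$ near site $n$, which is exactly where $J$ and $J^+$ agree asymptotically.

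First fix a compact $K\subset\F^\nt\setminus(\{p_k\}\cup\{0\})$ and put $x=\cm(z)$. Since $\cm$ maps $\F^\nt$ bijectively onto $\overline\C\setminus[\al,\be]$ and $K$ avoids $0$ and the $p_k$, the image $\cm(K)$ is a compact subset of $\C$ with $\dist(\cm(K),[\al,\be])>0$ that in addition avoids every eigenvalue $x_k$ of $J$; hence $c_K:=\min\{\dist(\cm(K),\si(J)),\dist(\cm(K),[\al,\be])\}>0$ (recall $\si(J)\subset[\al,\be]\cup\{x_k\}$). Because $J^+$ is a compression of $J'\in\T$ and $\si(J')=\large\E\subset[\al,\be]$, we get $\si(J^+)\subset[\al,\be]$; thus $G_{nn}(\,\cdot\,;J^+)$ is holomorphic on $\F^\nt$, and being the Cauchy transform $\int(x-t)^{-1}d\nu_n(t)$ of a probability measure $\nu_n$ on $[\al,\be]$, it is bounded below in modulus on $K$ by some $\ka_K>0$ independent of $n$: the map $(x,\nu)\mapsto\int(x-t)^{-1}d\nu(t)$ is continuous on the compact set $\cm(K)\times\mathcal M$ (with $\mathcal M$ the weak-$*$ compact set of probability measures on $[\al,\be]$), and it is nowhere zero there since for real $x\in\cm(K)$ the integrand has constant nonzero sign, while for $x\notin\R$ one has $\im\int(x-t)^{-1}d\nu=-\im x\int|x-t|^{-2}d\nu\neq0$. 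It therefore suffices to prove that $G_{nn}(z;J)-G_{nn}(z;J^+)\to0$ uniformly on $K$.

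By the second resolvent identity, with $\Delta:=J^+-J$,
\[
G_{nn}(z;J)-G_{nn}(z;J^+)=-\bigl\langle\de_n,(J-x)^{-1}\,\Delta\,(J^+-x)^{-1}\de_n\bigr\rangle ,
\]
and $\|(J-x)^{-1}\|,\|(J^+-x)^{-1}\|\leq c_K^{-1}$ on $K$. Given $p\geq1$, split $\Delta=\Delta_p'+\Delta_p''$, where $\Delta_p'$ retains only the entries of $\Delta$ at sites $\leq n-p$ (so $\Delta_p'=\Pi_{n-p+1}\Delta_p'\Pi_{n-p+1}$, with $\Pi_m$ the projection onto $\mathrm{span}\{\de_1,\dots,\de_m\}$) and $\Delta_p''$ is the remainder. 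The $\Delta_p''$-term is bounded by $c_K^{-2}\|\Delta_p''\|\leq 3c_K^{-2}\sup_{j>n-p}(|a_j-a_j'|+|b_j-b_j'|)$, which $\to0$ as $n\to\infty$ for each fixed $p$. For the $\Delta_p'$-term, inserting the projections gives
\[
\bigl|\langle\de_n,(J-x)^{-1}\Delta_p'(J^+-x)^{-1}\de_n\rangle\bigr|
\leq \|\Pi_{n-p+1}(J-\bar x)^{-1}\de_n\|\,\|\Delta\|\,\|\Pi_{n-p+1}(J^+-x)^{-1}\de_n\| .
\]
Here $\|\Pi_{n-p+1}(J-\bar x)^{-1}\de_n\|^2=\sum_{j\leq n-p+1}|\langle\de_j,(J-\bar x)^{-1}\de_n\rangle|^2\leq C^2\sum_{j\leq n-p+1}e^{-2\ga(n-j)}=O(e^{-2\ga p})$, since $\bar x$ lies at distance $\geq c_K$ from $\si(J)$; the constants $C,\ga$ are uniform over $\cm(K)$ and over $n$, because $J,J^+$, their norms, and their spectra do not depend on $n$. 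Likewise for $J^+$. Hence the $\Delta_p'$-term is $O(e^{-2\ga p})$ uniformly on $K$ and in $n$, and altogether $\sup_K|G_{nn}(z;J)-G_{nn}(z;J^+)|\leq o(1)+O(e^{-2\ga p})$ as $n\to\infty$. Letting $p\to\infty$ afterwards yields $G_{nn}(z;J)-G_{nn}(z;J^+)\to0$ uniformly on $K$, and dividing by $G_{nn}(z;J^+)$ (of modulus $\geq\ka_K$) gives \eqref{Gnn}.

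The only non-routine ingredient is the Combes--Thomas decay, which is what makes $G_{nn}$ insensitive to the far-away part of the matrix; everything else is bookkeeping. Consequently the main point to get right is that all the constants ($c_K$, $\ka_K$, $\ga$, and the norm bounds) are manifestly independent of $n$ — which holds because $J$ and $J^+$ are fixed operators and only the base vector $\de_n$ moves, the hypothesis $|a_n-a_n'|+|b_n-b_n'|\to0$ being precisely the statement that $J$ and $J^+$ are asymptotically equal near that moving vector. (An alternative route is the coefficient-stripping formula for $-1/G_{nn}$: the half-line $m$-function on the stripped side converges by a norm-resolvent argument, since $\|J\vert_n-J_n^+\|\to0$, but the finite reversed left piece still needs the same localization, so the direct resolvent comparison above is the more economical one.)
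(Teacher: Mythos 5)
Your proof is correct, and its core mechanism --- the second resolvent identity combined with Combes--Thomas decay and the fact that the tridiagonal entries of $J-J^+$ tend to zero --- is exactly the paper's argument for $G_{nn}(z;J)-G_{nn}(z;J^+)\to 0$ uniformly on $K$; your projection splitting $\Delta=\Delta_p'+\Delta_p''$ just makes explicit the double-limit bookkeeping that the paper leaves terse. Where you genuinely diverge is the lower bound on the denominator. The paper writes $G_{nn}(z;J^+)=\frac{u(z;d\mu_n')}{a_n'u(z;d\mu')}B(z)^nP_{n-1}(\cm(z),d\mu')$ via \eqref{Green} and \eqref{un}, and then invokes Proposition \ref{P'} together with the uniform non-vanishing of $u(z;d\mu_n')$ on compacts of $\F^\nt$ (the remark before Proposition \ref{u limit}, which rests on Theorem \ref{Jost asympt}); note this is not circular, since Proposition \ref{P'} concerns only $J'\in\T$ and is proved independently of Theorem \ref{Sz asy}, so your avoidance of \eqref{Green} was cautious rather than necessary. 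You instead observe that $G_{nn}(z;J^+)$ is the Cauchy transform of a probability measure on $[\al,\be]$ (using $\si(J^+)\subset[\al,\be]$, which the paper also records) and get a lower bound uniform in $n$ by compactness of $\cm(K)\times\{\text{probability measures on }[\al,\be]\}$ and the Herglotz sign/imaginary-part argument. This is more elementary and self-contained, and it even gives the bound on all of $\F^\nt\setminus\{0\}$; what the paper's route buys is structural information (the explicit product formula for $G_{nn}(z;J^+)$ consistent with the asymptotics of Corollary \ref{Pn}), not a logically stronger conclusion. Your identification of $c_K>0$ (compactness of $\cm(K)$, its disjointness from $\si(J)$ because $K$ avoids $\{p_k\}$ and $\cm(\F^\nt)=\overline\C\setminus[\al,\be]$) and the uniformity of the Combes--Thomas constants are handled correctly, so I see no gap.
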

Note that since both sides of \eqref{P limit} are analytic on $\F^\nt$, the uniform convergence in Theorem \ref{Sz asy} readily extends to compact subsets of $\F^\nt$. We shall prove Proposition \ref{Green conv} shortly, but first we give a more detailed description of the $P_n$'s associated with points in $\T$.

Let $J'\in\T$ be given. Following the notation of Section \ref{sec3}, we denote by $d\mu_n'$ the spectral measure of $J_n^+$ and use the relation
\begin{equation}
\label{un'}
a_n' u_n(z;d\mu')=B(z)^n u(z;d\mu_n')
\end{equation}
to define $u_n:=u_n(z;d\mu')$ for all $n\in\Z$, compare with \eqref{un}. In this way, the recurrence relation
\begin{equation}
\label{rec rel}
\cm u_n=a_n'u_{n+1}+b_n'u_n+a_{n-1}'u_{n-1}
\end{equation}
becomes valid for all integer values of $n$. Note that $u_n$ has a zero of order $n$ (or a pole of order $-n$) at every zero of $B$.

Besides $J'$, consider also the reflected matrix $J'^{,r}$ given by
\begin{equation}
\label{ab r}
\begin{aligned}
a_n'^{,r}&=a'_{-n-1}, \; \\
b_n'^{,r}&=b'_{-n}, \;
\end{aligned}
\bigg\}
\;\; n\in\Z.
\end{equation}
Since $\T$ is invariant under reflections of the Jacobi parameters, $J'^{,r}$ belongs to $\T$ and \eqref{rec rel} therefore holds with \raisebox{-0.2 em}{$'$} replaced by \raisebox{-0.2 em}{$'^{,r}$} everywhere. Equivalently,
\begin{equation}
\cm u_{-n}^{(r)}=a_n'u_{-(n+1)}^{(r)}+b_n'u_{-n}^{(r)}+a_{n-1}'u_{-(n-1)}^{(r)},
\end{equation}
where $u_n^{(r)}$ is short for $u_n(z;d\mu'^{,r})$. Hence $u_n$ and $u_{-n}^{(r)}$ satisfy the same recurrence relation for every $z\in\DD\setminus\{\ga(0)\}_{\ga\in\Ga_\E}$. As the former is $\ell^2$ at $+\infty$ and the latter is unbounded when $n\to\infty$, these two solutions are linearly independent for all $z\in\F^\nt\setminus\{0\}$.
Recalling that $P_{n-1}(\cm(z),d\mu')$ also satisfies \eqref{rec rel}, we deduce that
\begin{equation}
\label{Pcomb}
P_{n-1}=\frac{\Wr\bigl(P_{n-1}, u_n\bigr)u_{-n}^{(r)}-\Wr\bigl(P_{n-1}, u_{-n}^{(r)}\bigr)u_n}{\Wr\bigl(u_{-n}^{(r)},u_n\bigr)}
\end{equation}
for $n\geq 1$. By use of \eqref{un'}, the numerator can be written as
\begin{equation}
\label{num}
u(z;d\mu')\frac{B(z)^{-n}}{a_{n-1}'}u(z;d\mu'^{,r}_{-n})-
\frac{a_0'}{a_{-1}'}u(z;d\mu'^{,r})\frac{B(z)^n}{a_n'}u(z;d\mu'_n).
\end{equation}
Moreover, the denominator is given by
\begin{equation}
\label{den}
\Wr\bigl(u_{-n}^{(r)},u_n\bigr)=
B(z)^{-1}\medop{\sideset{}{_j}\prod} B(z,p_j^\plm)-B(z)\medop{\sideset{}{_j}\prod} B(z,z_j^\plm),
\end{equation}
where $p_j^\plm$ and $z_j^\plm$ are the unique points in $\F$ for which $\{\cm(p_j^\plm)\}$ and $\{\cm(z_j^\plm)\}$ form the complete sets of poles
and zeros of $m^\pl$ and $m^\mn$ in $\R\setminus\large\E$.
%These poles and zeros of course coincide with the eigenvalues of $J^\pm$ and the once stripped matrices.
To see this, start by noting that $d\mu'^{,r}_{-1}$ is nothing but the spectral measure of the matrix $J^-$ defined in Section \ref{sec2}.
By \eqref{un'} and \eqref{uW}, we have
\begin{align}
\notag
\Wr\bigl(u_{-n}^{(r)},u_n\bigr)&=
a_n'\bigl(u_{-n-1}^{(r)} u_n-u_{-n}^{(r)}u_{n+1}\bigr)\raisebox{-0.01cm}{$\big\vert_{n=0}$} \\
&= \frac{u(z;d\mu')u(z;d\mu'^{,r}_{-1})}{a_0'B(z)}\Bigl\{1-(a'_0)^2M^+(z)M^-(z)\Bigr\}.
\end{align}
Due to the reflectionless condition \eqref{refl m}, the product of $M$-functions reduces to a ratio of Blaschke products and the Szeg\H{o} parts
of the Jost functions cancel one another. Thus we obtain \eqref{den}.

Our considerations lead to the following
\begin{proposition}
\label{P'}
Suppose that $J'\in\T$ and let $J'^{,r}$ be given by \eqref{ab r}. If $d\mu'^{,r}$ denotes the spectral measures of $J'^{,r}$ restricted to $\ell^2(\N)$, then
\begin{equation}
\label{Jminus}
\biggl\vert
a_{n}'B(z)^{n}P_{n}(\cm(z),d\mu')-
\frac{u(z;d\mu_{-n-1}'^{,r})u(z;d\mu')}{\prod_j B(z,p_j^\plm)-B(z)^2\prod_jB(z,z_j^\plm)}
\biggr\vert \longrightarrow 0 \, \mbox{ as $n\to\infty$}
\end{equation}
locally uniformly on $\F^\nt$. In particular, $B(z)^n P_n(\cm(z),d\mu')$ is uniformly bound\-ed away from zero and infinity on compact subsets of $\F^\nt$.
\end{proposition}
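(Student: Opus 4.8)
The plan is to read off Proposition \ref{P'} from the identities \eqref{Pcomb}, \eqref{num}, \eqref{den} already assembled. Put $n\mapsto n+1$ in \eqref{Pcomb}, insert \eqref{num} for the numerator and \eqref{den} for the denominator, multiply through by $a_n'B(z)^n$, and clear the factor $B(z)^{-1}$ of \eqref{den} by multiplying numerator and denominator by $B(z)$. Abbreviate $D(z):=\prod_jB(z,p_j^\plm)-B(z)^2\prod_jB(z,z_j^\plm)$ and $E_n(z):=\frac{a_0'a_n'}{a_{-1}'a_{n+1}'}\,B(z)^{2n+2}\,u(z;d\mu'^{,r})\,u(z;d\mu'_{n+1})$. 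Using the values $\Wr(P_{n-1},u_n)=u(\,\cdot\,;d\mu')$ and $\Wr(P_{n-1},u_{-n}^{(r)})=\tfrac{a_0'}{a_{-1}'}u(\,\cdot\,;d\mu'^{,r})$ that are implicit in \eqref{num}, together with \eqref{un'}, the first term of \eqref{num} collapses to $u(z;d\mu')\,u(z;d\mu_{-n-1}'^{,r})$, and one lands on the exact identity
\[
a_n'B(z)^nP_n(\cm(z),d\mu')=\frac{u(z;d\mu')\,u(z;d\mu_{-n-1}'^{,r})-E_n(z)}{D(z)},\qquad z\in\F^\nt.
\]
Hence \eqref{Jminus} is precisely the claim that $E_n(z)/D(z)\to0$ locally uniformly on $\F^\nt$.

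That claim is a bundle of routine estimates. First, $D$ is a \emph{fixed} analytic function, non-vanishing on $\F^\nt$: for $z\neq0$ one has $D(z)=B(z)\,\Wr(u_{-n}^{(r)},u_n)$ with $B(z)\neq0$ and with the Wronskian non-zero because $u_n$ (recessive at $+\infty$) and $u_{-n}^{(r)}$ (unbounded at $+\infty$) are linearly independent on $\F^\nt\setminus\{0\}$; and at $z=0$ the term $B(z)^2\prod_jB(z,z_j^\plm)$ vanishes while $\prod_jB(0,p_j^\plm)>0$. So $1/D$ is bounded on compacts of $\F^\nt$. Next, by \eqref{B and Green}, $|B(z)|=e^{-g(\cm(z))}<1$ on $\DD$ and $\sup_K|B|<1$ for every compact $K\subset\F^\nt$ ($B$ vanishes in $\F^\nt$ only at $0$, near which it is small, while off a neighbourhood of $0$ the map $\cm$ takes $K$ into a compact subset of $\Om$ not containing $\infty$, on which $g\geq\de>0$); hence $B(z)^{2n+2}\to0$ uniformly on $K$. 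The Jacobi parameters of $J'\in\T$ are bounded above and below by positive constants (cf. \eqref{Szego bound}), so $a_0'a_n'/(a_{-1}'a_{n+1}')$ is bounded in $n$; $u(\,\cdot\,;d\mu'^{,r})$ is fixed analytic, hence bounded on $K$; and $u(z;d\mu'_{n+1})$ is uniformly bounded on compacts of $\F^\nt$ --- this is exactly the observation recorded just before Proposition \ref{u limit}, that the Jost functions of the strippings $J^+\vert_n$ cannot approach a zero or a pole inside $\F^\nt$ without contradicting Theorem \ref{Jost asympt}. Multiplying these bounds gives $E_n/D\to0$ locally uniformly on $\F^\nt$, which is \eqref{Jminus}.

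For the final clause, the identity above together with \eqref{Jminus} reduces the matter to bounding $u(z;d\mu')\,u(z;d\mu_{-n-1}'^{,r})/D(z)$ above and away from $0$, uniformly in $n$, on compacts of $\F^\nt$; as $u(\,\cdot\,;d\mu')$ and $1/D$ are fixed non-vanishing analytic functions on $\F^\nt$, the only real point is the corresponding two-sided bound on $u(z;d\mu_{-n-1}'^{,r})$. Now $d\mu_{-n-1}'^{,r}$ is the spectral measure of $J_n^-$, all of which belong to $\Sz(\large\E)$ by the corollary to Proposition \ref{Sz contains T}; and by the almost periodicity of $J'$, every strong subsequential limit of the one-sided matrices $J_n^-$ is the reflected half $\widetilde J^-$ of some $\widetilde J\in\T$, whose Jost function is finite and non-vanishing on $\F^\nt$ (the eigenvalues of $\widetilde J^-$ lie in the gaps of $\large\E$, which $\cm(\F^\nt)=\overline\C\setminus[\al,\be]$ avoids). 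Feeding these limits into the convergence mechanism behind Theorem \ref{Jost asympt}, exactly as was done for $u(z;d\mu'_n)$ but now for the shifts of $J'^{,r}\in\T$, supplies the required uniform bound, and since $a_n'$ is bounded above and below we conclude that $B(z)^nP_n(\cm(z),d\mu')$ is uniformly bounded away from $0$ and $\infty$ on compacts of $\F^\nt$. I expect this last step --- the uniform non-degeneracy of $u(z;d\mu_{-n-1}'^{,r})$ --- to be the main obstacle: unlike the strippings $J^+\vert_n$, the matrices $J_n^-$ are left-translates rather than strippings of one fixed element of $\Sz(\large\E)$, so one must first invoke the almost periodicity of $J'$ to keep them inside a compact part of $\T$ before running the Jost-asymptotics argument. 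Everything else is formal manipulation on top of \eqref{Pcomb}--\eqref{den}.
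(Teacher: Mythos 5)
Your argument is essentially the paper's own proof, written out in full: the paper likewise multiplies \eqref{Pcomb} by $a_{n-1}'B(z)^{n-1}$, rewrites the quotient via \eqref{num}--\eqref{den}, and concludes from $B(z)^n\to 0$ together with the uniform boundedness of $u(z;d\mu_n')$ on compacts of $\F^\nt$, which is exactly your identity with $E_n/D\to 0$. Your extra care over the final clause --- the uniform two-sided bound on $u(z;d\mu_{-n-1}'^{,r})$, which Theorem \ref{Jost asympt} does not literally cover because the matrices $J_n^-$ are not strippings of a single fixed element of $\Sz(\large\E)$ --- addresses a point the paper leaves implicit, and your proposed remedy (each $J_n^-$ is the plus-half of an element of $\T$, so its eigenvalues, at most one per gap and dominated by $g(c_j)$, and its Szeg\H{o} data are uniformly controlled, allowing the convergence mechanism of Section \ref{sec4} to run) is sound.
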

Put into words, \eqref{Jminus} says that the asymptotic behaviour of $P_n(\,\cdot\,,d\mu')$ is controlled by $a_n'B^n$ and the Jost function for $J^-_n$.
The remaining factor involving the Jost function for $J^+$ and certain Blaschke products is independent of $n$.
%We can think of the Jost function in the denominator as a suitable normalization that seeks to `cancel' the dependence on the parameters $a_n'$, $b_n'$
%for negative $n$.
\begin{proof}
Multiply the expression for $P_{n-1}$ in \eqref{Pcomb} by $a_{n-1}'B(z)^{n-1}$ and rewrite the quotient by use of \eqref{num}--\eqref{den}. The result follows by noting that $B(z)^n\to 0$ as $n\to\infty$ while $u(z;d\mu'_n)$ is uniformly bounded on compact subsets of $\F^\nt$.
\end{proof}

By combining Theorem \ref{Sz asy} and Proposition \ref{P'}, we can get a description of $P_n(\,\cdot\,,d\mu)$ when $d\mu$ lies in $\Sz(\large\E)$
and $n$ is large.
%First, note that
%\[
%u(z;d\mu')u(z;d\mu'^{,r}_{-1})=a_0'\medop{\sideset{}{_j}\prod} B(z,p_j^\plm),
%\]
%where $p_j^\plm$ are the unique points in $\F$ for which $\{\cm(p_j^\plm)\}$ form a complete set of eigenvalues of $J^+$ and $J^-$ in $\R\setminus\large\E$.
It follows from \eqref{Jminus} that
\[
\frac{a_n'B(z)^nP_n(\cm(z),d\mu')}{u(z;d\mu_{-n-1}'^{,r})}
\longrightarrow \frac{u(z;d\mu')}{\prod_j B(z,p_j^\plm)-B(z)^2\prod_jB(z,z_j^\plm)}
\]
uniformly on compact subsets of $\F^\nt$. By \eqref{P limit} and since $a_n/a_n'\to 1$, we arrive at %the following
\begin{corollary}
\label{Pn}
For $d\mu$ in the Szeg\H{o} class for $\large\E$ and with notation as above, we have
\begin{equation}
\label{Pn asymp}
P_n(\cm(z),d\mu) \,\sim\, \frac{u(z;d\mu'^{,r}_{-n-1})}{a_n B(z)^n}
\frac{u(z;d\mu)}{\prod_j B(z,p_j^\plm)-B(z)^2\prod_jB(z,z_j^\plm)}
\, \mbox{ as $n\to\infty$}
\end{equation}
locally uniformly on $\F^\nt$.
\end{corollary}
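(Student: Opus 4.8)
The plan is to obtain \eqref{Pn asymp} by combining three facts already established: the precise asymptotics of $P_n(\,\cdot\,,d\mu')$ recorded in Proposition \ref{P'}, the ratio limit $P_n(\cm(z),d\mu)/P_n(\cm(z),d\mu')\to u(z;d\mu)/u(z;d\mu')$ of Theorem \ref{Sz asy}, and the convergence $a_n/a_n'\to 1$ from Theorem \ref{JSC}. Write $D(z)$ for the $n$-independent denominator $\prod_j B(z,p_j^\plm)-B(z)^2\prod_j B(z,z_j^\plm)$ appearing in \eqref{Jminus}; by \eqref{den} it equals $B(z)\,\Wr\bigl(u_{-n}^{(r)},u_n\bigr)$ and is therefore analytic and zero-free on $\F^\nt$, since $u_n$ and $u_{-n}^{(r)}$ are linearly independent solutions of \eqref{rec rel} away from $z=0$ and the apparent singularity of the product at $z=0$ is removable and non-vanishing. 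Recall also, from the discussion preceding Proposition \ref{u limit}, that $u(\,\cdot\,;d\mu_{-n-1}'^{,r})$ is bounded away from $0$ and $\infty$ on compact subsets of $\F^\nt$, uniformly in $n$ (otherwise a subsequential limit would be a Jost function with a zero or pole in $\F^\nt$, contradicting Theorem \ref{Jost asympt}).

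Dividing the difference in \eqref{Jminus} by the uniformly bounded-below factor $u(\,\cdot\,;d\mu_{-n-1}'^{,r})$ then gives
\[
\frac{a_n'\,B(z)^n\,P_n(\cm(z),d\mu')}{u(z;d\mu_{-n-1}'^{,r})}\;\longrightarrow\;\frac{u(z;d\mu')}{D(z)}
\]
locally uniformly on $\F^\nt$. I would then invoke the elementary identity
\[
\frac{a_n\,B(z)^n\,P_n(\cm(z),d\mu)}{u(z;d\mu_{-n-1}'^{,r})}
=\frac{a_n}{a_n'}\cdot\frac{P_n(\cm(z),d\mu)}{P_n(\cm(z),d\mu')}\cdot\frac{a_n'\,B(z)^n\,P_n(\cm(z),d\mu')}{u(z;d\mu_{-n-1}'^{,r})}
\]
and let $n\to\infty$: by $a_n/a_n'\to 1$, by \eqref{P limit}, and by the display above---all factors being analytic in $z$, so that products of locally uniformly convergent families again converge locally uniformly---the left-hand side tends to $\bigl(u(z;d\mu)/u(z;d\mu')\bigr)\bigl(u(z;d\mu')/D(z)\bigr)=u(z;d\mu)/D(z)$. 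Rearranging yields $a_n\,B(z)^n\,P_n(\cm(z),d\mu)\to u(z;d\mu_{-n-1}'^{,r})\,u(z;d\mu)/D(z)$ locally uniformly on $\F^\nt$, which is \eqref{Pn asymp} after dividing by $a_n\,B(z)^n$.

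I do not expect a genuine obstacle: the corollary is a synthesis of Theorems \ref{JSC} and \ref{Sz asy} with Proposition \ref{P'}. The care needed is purely bookkeeping---one must verify that the factors one divides by, $u(\,\cdot\,;d\mu_{-n-1}'^{,r})$ and $D$, as well as $B(z)^n P_n(\cm(z),d\mu')$ (the ``bounded away from zero and infinity'' clause of Proposition \ref{P'}), are bounded away from $0$ and $\infty$ locally uniformly in $n$, so that ``difference tends to $0$'' and ``ratio tends to $1$'' may be used interchangeably. It is also worth stating explicitly what ``$\sim$'' means near the zeros $p_k$ of $u(\,\cdot\,;d\mu)$ (where $P_n(\cm(z),d\mu)$ does not vanish) and near $z=0$ (where $P_n(\cm(z),d\mu)$ has a pole of order $n$ while $B(z)^n$ has a zero of the same order): the clean, fully uniform assertion is that $a_n\,B(z)^n\,P_n(\cm(z),d\mu)-u(z;d\mu_{-n-1}'^{,r})\,u(z;d\mu)/D(z)\to 0$ locally uniformly on all of $\F^\nt$, both terms being analytic there.
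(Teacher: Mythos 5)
Your argument is correct and is essentially the paper's own proof: the corollary is obtained exactly by dividing \eqref{Jminus} by the uniformly bounded-below factor $u(\,\cdot\,;d\mu_{-n-1}'^{,r})$ to get the intermediate limit for $a_n'B(z)^nP_n(\cm(z),d\mu')$, and then combining it with \eqref{P limit} and $a_n/a_n'\to 1$. Your additional remarks on the zero-free denominator and on the precise meaning of $\sim$ near $z=0$ and the points $p_k$ are sound bookkeeping, not a different route.
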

The above result describes the large $n$ behaviour of $P_n(x,d\mu)$ for $x\in\C\setminus\cvh(\large\E)$.
When $\large\E$ is the interval $[-2,2]$, the formula in \eqref{Pn asymp} reduces to
\[
z^n P_n(z+1/z,d\mu) \,\sim\, \frac{u(z;d\mu)}{1-z^2} %\, \mbox{ as $n\to\infty$}
\]
which is consistent with \cite{MR1844996}.
There is no similar description on the set $\large\E$, where the zeros of the polynomials accumulate. As a matter of fact, one can only obtain $L^2$ asymptotics on the spectrum. For $x\in\Om$, we denote by $\icm(x)$ the unique point in $\F$ such that $\cm(\icm(x))=x$ and define $\icm(t):=\icm(t-i0)$ for $t\in\large\E$.
Following the approach of \cite[Sect. 8]{MR2784484} and with $d\mu=f(t)dt+d\mu_{\s}$, one is led to
\begin{equation}
\int_\E\,\bigl\vert P_n(t,d\mu)-\im\bigl\{u(\icm(t);d\mu) u_{n+1}(\icm(t);d\mu')\bigr\}\bigr\vert^2 f(t)dt \to 0
\end{equation}
and
\begin{equation}
\int_\R \vert P_n(x)\vert^2 d\mu_\s(x) \to 0
\end{equation}
as $n\to\infty$. See also \cite{MR1981915}.
For homogeneous sets, this type of asymptotics led Volberg and Yuditskii \cite{MR1896882} to construct a scattering theory for Jacobi matrices with an almost periodic background. In this theory, the elements in $\T$ are exactly the ones with vanishing reflection coefficients.

%The symbol $\sim$ means that the ratio converges to $1$ locally uniformly on $\F^\nt$.

We conclude the section with the proof of Proposition \ref{Green conv}.
\begin{proof}
By the second resolvent identity, we have
\begin{align*}
G_{nn}(z;J^+)-G_{nn}(z;J)&=
\bigl< \de_n, (J-\cm(z))^{-1} (J-J^+) (J^+-\cm(z))^{-1}\de_n\bigr> \\[0.2em]
&=\sum_{k,m} G_{nk}(z;J) (J-J^+)_{km} G_{mn}(z;J^+).
\end{align*}
As the $km$ entry of $J-J^+$ is zero whenever $\vert k-m \vert>1$, the double sum reduces to
\[
\sum_k G_{nk}(z;J)\sum_{l=0,\pm 1}(J-J^+)_{k,k+l}\,G_{k+l,n}(z;J^+).
\]
Given a compact subset $K\subset\F^\nt\setminus(\{p_k\}\cup\{0\})$, there are constants $C, \de>0$ so that the Combes--Thomas estimate
\[
\big\vert G_{nk}(z;J)\big\vert,\, \big\vert G_{kn}(z;J^+)\big\vert\leq Ce^{-\de\vert k-n\vert}
\]
holds for all $z\in K$. Since $(J-J^+)_{k,k+l}\to 0$ as $k\to\infty$, it follows that
\[
\big\vert G_{nn}(z;J)-G_{nn}(z;J^+)\big\vert\to 0 %\;\mbox{ as $n\to\infty$}
\]
uniformly on $K$. According to \eqref{Green} and \eqref{un}, we have
\[
G_{nn}(z;J^+)=\frac{u(z;d\mu_n')}{a_n' u(z;d\mu')}B(z)^nP_{n-1}(\cm(z),d\mu').
\]
Hence, $G_{nn}(z;J^+)$ is bounded below on $K$ by Proposition \ref{P'} and the remark prior to Proposition \ref{u limit}.
This proves \eqref{Gnn}.
\end{proof}

\small{
\noindent\emph{Jacob S. Christiansen} \\
\sc Centre for Mathematical Sciences \\
\phantom{.} \;\sc Lund University \\
\phantom{.} \;\;\;\;Box 118, 22100 Lund, Sweden \\
\phantom{.} \qquad\tt email:\,stordal@maths.lth.se}

\end{document}